\pdfoutput=1
\documentclass{amsart}
\usepackage{amsmath,amssymb}
\usepackage{hyperref}
\usepackage{amsrefs}
\usepackage{graphicx,color}
\usepackage{pict2e}
\usepackage{enumitem}

\newlist{steps}{enumerate}{1}
\setlist[steps]{label=Step~\arabic*, ref=Step~\arabic*}
\newtheorem{theorem}{Theorem}
\newtheorem{proposition}{Proposition}
\newtheorem{lemma}{Lemma}
\newtheorem{fact}{Fact}
\newtheorem{corollary}{Corollary}

\theoremstyle{definition}
\newtheorem{definition}{Definition}

\newtheorem{notation}{Notation}
\newtheorem{framework}{Framework}

\theoremstyle{remark}
\newtheorem{remark}{Remark}

\newcommand{\ri}{\operatorname{RI}}
\newcommand{\Sj}{S^-_{\textup{join}}}
\newcommand{\Ts}{T_{\textup{split}}}
\newcommand{\seq}{\operatorname{seq}}

\begin{document}
\title[Genera of two-component alternating links]{Genera of two-component alternating links}
\author{Noboru Ito}
\author{Nodoka Kawajiri}
\address{Department of Mathematics, Shinshu University,  Wakasato 4-17-1, Nagano, 380-8553, Japan.  
}
\email{nito@shinshu-u.ac.jp}
\keywords{crosscap number; non-orientable link genus; splice-unknotting number; link; knot}
\date{February 5, 2026}
\maketitle
\begin{abstract}
We extend the equality-type results of Ito--Takimura and Kindred
for the non-orientable genera of alternating knots
to the setting of two-component alternating links.
We show that, for such links, 
a unified quantity capturing both orientable and non-orientable genera   
is completely determined by the splice sequence realizing the splice-unknotting number up to an explicit correction term.
\end{abstract}
\section{Introduction}\label{sec:Intro}
The determination of orientable and non-orientable genera of knots and links
has long been a classical and central problem in topology.
For knots, the orientable genus admits a well-established theory,
beginning with Seifert’s construction \cite{Seifert1935, Kauffman1983} and extending to equality-type results
for alternating knots via the Alexander polynomial \cite{Murasugi1958, Crowell1959},
as well as powerful general tools such as Heegaard Floer homology.
In contrast, the non-orientable genus, also known as the crosscap number,
has historically resisted a comparable description in general settings.
Aside from specific classes—for example, those studied by
Hatcher--Thurston, Teragaito  \cite{Teragaito2004}, Hirasawa--Teragaito \cite{HirasawaTeragaito2006},
and Ichihara--Mizushima \cite{IchiharaMizushima2010}—no equality-type framework had been available
for several decades.
A notable breakthrough was achieved independently by
Ito--Takimura \cite{ItoTakimura2020b, ItoTakimura2020v} and Kindred \cite{Kindred2020},
who showed that for any prime alternating knot,
the non-orientable genus is exactly determined by the splice-unknotting number.

A fundamental difficulty remains, however, in extending such results
from knots to links.
In the splittable link case, the problem can be reduced to the knot components,
and equality-type bounds follow by combining classical knot inequalities
with elementary Euler characteristic considerations.
In particular, for split two-component links,
Zhang \cite{Zhang2008} obtained sharp formulas under a convention based on
connected non-orientable spanning surfaces.
For non-splittable links, however, such a reduction, as in the split case, is no longer available.
Even for two-component links, the interaction between components
produces genuinely new phenomena,
and no equality-type description analogous to the knot case
has previously been known.
Hence, the purpose of this paper is to address this remaining case 
by studying two-component alternating links,
and to show that their orientable and non-orientable genera
are governed by the splice-unknotting number
up to an explicit and computable correction term.

The relationship between the splice-unknotting number $u^-(K)$,
introduced in \cite{Itotakimura2018}, and the non-orientable genus (crosscap  number) $C(K)$ is well understood (Fact~\ref{thm:ItoTakimuraUC}).      
\begin{fact}[{Ito-Takimura \cite{ItoTakimura2020v}}, {Kindred \cite{Kindred2020}}]\label{thm:ItoTakimuraUC}
For any prime alternating knot $K$, $u^-(K)$ $=$ $C(K)$.  
\end{fact}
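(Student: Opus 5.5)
The plan is to prove the two inequalities $C(K)\le u^-(K)$ and $u^-(K)\le C(K)$ separately for a prime alternating knot $K$. Throughout we work with a reduced alternating diagram $D$ of $K$; by Menasco's theorem, primality of $K$ means $D$ is prime as a diagram. Recall that $u^-(K)$ is the minimal number of splices needed to turn a diagram of $K$ into a diagram of the trivial knot. Each splice is orientation-incoherent (a Seifert-type splice would raise the number of components), so the diagram stays a one-component curve throughout.

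\emph{Upper bound $C(K)\le u^-(K)$.} Suppose a sequence of $n$ incoherent splices takes $D$ to a diagram of the unknot. Reverse the sequence. The unknot bounds a disk, and undoing one incoherent splice corresponds to attaching a half-twisted band to the current spanning surface. Each band lowers the Euler characteristic by one, so we obtain a spanning surface $F$ for $K$ with $\beta_1(F)=n$. The surface $F$ is non-orientable provided $n\ge 1$, since the first band already carries a half twist relative to the disk; the case $n=0$ is the unknot. Hence $C(K)\le n$, and taking $n=u^-(K)$ gives the bound.

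\emph{Lower bound $u^-(K)\le C(K)$.} This is the substantive direction. Take a non-orientable spanning surface $F$ realizing $C(K)$, so $\beta_1(F)=C(K)$. I would apply the Adams--Kindred theorem: for a prime reduced alternating diagram, some \emph{state surface} of $D$ attains the minimal $\beta_1$ among all non-orientable spanning surfaces. Such a state surface is built from a state $s$ of $D$ by attaching disks to the state circles and one crossing band at each crossing. When the state is non-adequate or $F$ is a checkerboard surface, this needs care, but Adams--Kindred handle it by a reduction to an adequate situation.

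For a state surface with $c$ crossings and $|s|$ state circles, $\beta_1=c-|s|+1$. The combinatorial step is then to realise it as a splice sequence. Choose a spanning tree in the graph whose vertices are the state circles and whose edges are the crossings. Splicing at every crossing \emph{not} in the tree, according to the state $s$, leaves a diagram in which the tree crossings are all nugatory. That diagram is therefore a diagram of the trivial knot, and the number of splices used is $c-(|s|-1)=\beta_1$.

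One must check that each of these splices can be chosen orientation-incoherent and keeps the diagram one-component. Ordering the non-tree crossings appropriately (e.g.\ leaf-inward along the cycle structure) should ensure this. It is consistent with the fact that the surface is non-orientable: a state surface is orientable exactly when it is the Seifert state. This yields $u^-(K)\le \beta_1 = C(K)$.

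\emph{Main obstacle.} I expect the main difficulty to be the lower bound, specifically the minimality of state surfaces among all non-orientable spanning surfaces. This requires Menasco-type crossing-ball arguments or Kindred's flyping and plumbing analysis, and I would cite it rather than reprove it. A secondary subtlety is ensuring the optimal state surface is non-orientable even when the Seifert surface has smaller $\beta_1$. The standard fix is to add a crosscap at a crossing, which costs $+1$ and matches the convention used in Ito--Takimura's counting of $u^-$.
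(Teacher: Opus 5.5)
The paper gives no proof of this statement. It is imported from Ito--Takimura and Kindred, so your outline has to stand on its own.

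Your upper bound is the standard argument and is fine once $u^-$ is stated correctly. It counts $S^-$ splices in a sequence of $S^-$'s and free $\ri^-$'s ending at a crossingless circle, not splices ending at an arbitrary unknot diagram. With that definition, reversing the sequence is literally the state-surface construction:
a reversed $\ri^-$ adds a disk and a band without changing $\chi$; a reversed $S^-$ adds a band that keeps the boundary connected; and the first such band is necessarily a M\"obius-type attachment.

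The genuine gap is in the lower bound, at ``ordering the non-tree crossings appropriately \dots\ should ensure this.'' Whether the splice prescribed by $s$ at a crossing keeps the curve connected depends on the \emph{partially} spliced surface (tree part plus the non-tree bands still present), not on $F_s$. Once that intermediate surface is orientable with connected boundary, every remaining non-tree splice is the coherent one and disconnects the curve. Non-orientability of $F_s$ gives no control over this, and your argument never uses primality, so it cannot be right as stated.

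Concretely, let $D=D_1\#D_2$ be the reduced alternating diagram of $7_4\#7_4$. Take the Seifert state on $D_2$, and on $D_1$ the Seifert state with one splice changed (your own ``crosscap fix''). The state surface is $F_1\natural F_2$, non-orientable, with $\beta_1=3+2=5$, where $F_2$ is the genus-one Seifert surface of $7_4$. The state graph is a one-point union of the two summands' graphs, so every spanning tree leaves exactly two non-tree bands in $F_2$. Splicing the first of them turns $F_2$ into an annulus, so the curve acquires a second component whatever the order.

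This matches $u^-(D)=u^-(D_1)+u^-(D_2)=6$ (additivity of $u^-$) against $C(7_4\#7_4)\le 2g(7_4\#7_4)+1=5$. That is exactly the discrepancy the correction $[m]$ in this paper accounts for.

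So the missing piece, which is the actual content of the cited proofs, is this: for a \emph{prime} reduced alternating diagram, show that some $\beta_1$-minimal non-orientable state admits an order in which every split-off piece is a $1$-gon. This must include the one-splice-changed Seifert state when $C(K)=2g(K)+1$. Primality and the structure of the $\chi$-maximizing states have to be used there, and citing Adams--Kindred does not supply it.
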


We extend Fact~\ref{thm:ItoTakimuraUC} to the two-component case (Theorem~\ref{thm:uL}) by introducing definitions corresponding to the link case  (Definitions~\ref{def:uL} and ~\ref{def:mdivisor}).   
\begin{definition}\label{def:uL}
Let $L$ be a non-splittable two-component link and $D$ an arbitrary diagram of $L$.
Let $\seq(D)$ be a splice sequence consisting only of splices of types $\Sj$, $S^-$, and $\ri^-$ (Definition~\ref{def:splice} and Figure~\ref{fig:splice2}) to obtain a circle with no crossings from $D$.
Let $\#S^-(\seq(D))$ denote the number of splices of type $S^-$ appearing in $\seq(D)$.
Then we define
\[
u^-(D):=\min_{\seq(D)} \#S^-(\seq(D)),\qquad
u^-(L):=\min_D u^-(D),
\]
where the minimum is taken over all splice sequences $\seq(D)$ as above and all diagrams $D$ of $L$. 
\end{definition}
Here the symbol $\ri^-$ is regarded as a splice in the sense of
Figure~\ref{fig:splice2}, rather than as a Reidemeister move.  
\begin{remark}\label{rmk:uL-uniqueSj}
In $\seq(D)$, once a splice of type $\Sj$ is applied, the resulting diagram becomes one-component, and hence no further splice of type $\Sj$ is applicable; in particular, any splice sequence $\seq(D)$ realizing $u^-(D)$ contains exactly one occurrence of $\Sj$ (Lemma~\ref{lem:UniqueSjoin}).
\end{remark}  
\begin{definition}\label{def:mdivisor}    
Any alternating knot $K$ admits a prime decomposition
$K=K_1$ $\#\cdots\#$ $K_n$ in which each $K_i$ is alternating~\cite{Menasco1984}.     
Letting $B(K_i)$ be as in Definition~\ref{def:BL}, $B(K_i)$ $=$ $u^- (K_i)$ or $u^-(K_i)-1$ \cite{ItoTakimura2020b}.  If the latter equation holds, we call $K_i$ \emph{a negative factor}.  Suppose that the number of negative factors is $m$.  Then let $[m]=m$ if $m=0, 1$ and $[m]=m-1$ otherwise.   
\end{definition}
\begin{definition}\label{def:genus}
Given an $n$-component link $L$, we define
\[
C(L)=\min \left\{
\begin{aligned}
& 2-\chi(\Sigma)-n \\
& \mid \Sigma~\text{is a connected non-orientable spanning surface with } \partial\Sigma=L
\end{aligned}
\right\},
\]
where $\chi(\cdot)$ denotes the Euler characteristic.  
We define the \emph{genus} of $L$ to be
\[
\min\{\,C(L),\,2g(L)\,\},
\]
where $g(L)$ denotes the orientable genus 
and $C(L)$ denotes the non-orientable genus of $L$ in $\mathbb{R}^3$.
\end{definition}
\begin{remark}
The quantity $\min\{C(L),2g(L)\}$ is introduced to treat
orientable and non-orientable spanning surfaces in a unified manner.  
\end{remark}
In the following, throughout this paper, $L$ denotes a non-splittable two-component link.\footnote{For splittable two-component links, we have results in \cite{Zhang2008}.}  
    
Before stating a main result (Theorem~\ref{thm:uL}), we restrict to alternating diagrams from this point on.  We emphasize that  
this alternating assumption is used only to guarantee that the maximal Euler characteristic of spanning surfaces is realized by a state surface and no other part of the argument relies on alternation.   
Our main result is as follows.  
\begin{theorem}\label{thm:uL}
Let $L$ be a non-splittable two-component alternating link and let $c$ be a crossing between the two components of an alternating link diagram $D$ of $L$.  Let $K$ and $K'$ be knots obtained from splices of two different types on $c$.  
Suppose that the maximal Euler characteristic of spanning surfaces of $K$ is greater than that of $K'$ and $K$ includes $m$ negative factors.  Let $u^-(L)$, $[m]$, $C(L)$, and $g(L)$ be as in Definitions~\ref{def:uL},  \ref{def:mdivisor}, and \ref{def:genus}.  
Then 
\[
\min \{ C(L), 2g(L) \} = u^-(L) - [m].  
\] 
\end{theorem}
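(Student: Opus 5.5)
We reduce the two-component statement to the knot case, Fact~\ref{thm:ItoTakimuraUC} and its refinement for composite alternating knots from \cite{ItoTakimura2020b}, by looking at the crossing $c$. Both sides of the identity will be expressed through the knot $K$ obtained by the better splice at $c$. Throughout we use the fact, allowed by the alternating hypothesis, that the maximal Euler characteristic of spanning surfaces of $L$, $K$ and $K'$ is realized by state surfaces of the reduced alternating diagrams. Writing $\chi_{\max}(\cdot)$ for this maximum, we have $\min\{C(L),2g(L)\} = 1-\chi_{\max}(L)$ for a two-component link: with $n=2$, $C(L)=-\chi(\Sigma)$ for a connected non-orientable $\Sigma$, and $2g(L)=2g(\Sigma) = -\chi(\Sigma)$ for a connected orientable $\Sigma$ with two boundary components. (The normalization has to be checked carefully, but the plan is to get a single Euler-characteristic quantity, so orientability no longer matters.) In the same way, for a knot $K$ the analogous quantity $B(K)$ of Definition~\ref{def:BL} is $1-\chi_{\max}(K)$. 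The results of \cite{ItoTakimura2020b} give $B(K)=u^-(K)-[m]$ once they are summed over the prime factors $K_i$, each contributing $u^-(K_i)$ or $u^-(K_i)-1$. The correction $[m]$ rather than $m$ comes from the connected-sum rule: crosscaps add with a saving when at least two factors are negative. This step is bookkeeping from the knot literature.

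\textbf{Step 1: Euler characteristic comparison at $c$.} Smoothing $c$ turns a state surface $\Sigma$ of $D$ into a state surface of the smoothed diagram, and conversely adding a half-twisted band at $c$ joins the two components. Since the two strands at $c$ belong to different components, the band keeps the surface connected and gives $\chi(\Sigma_L)=\chi(\Sigma_K)-1$. Every state surface of $D$ arises from a state of one of the two smoothings at $c$, so
\[\chi_{\max}(L)=\max\{\chi_{\max}(K),\chi_{\max}(K')\}-1=\chi_{\max}(K)-1.\]
The last equality uses the hypothesis on $K$. Hence $\min\{C(L),2g(L)\}=1-\chi_{\max}(K)+1 = B(K)+1$.

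\textbf{Step 2: $u^-(L)=u^-(K)+1$.} By Lemma~\ref{lem:UniqueSjoin}, any optimal sequence contains exactly one $\Sj$. For the upper bound, apply the splice at $c$ producing $K$; we must check that it is of type $\Sj$ or $S^-$ and account for it as costing one. Then follow an optimal knot sequence for $K$ on a suitable diagram. Realizing $u^-(K)$ may require changing diagram, which is harmless since $u^-(L)$ also minimizes over diagrams. For the lower bound, take an optimal sequence for $L$ on any diagram and read it as a sequence of state-surface operations. Each $S^-$ and each $\Sj$ raises $-\chi$ by one for the resulting surface, while $\ri^-$ preserves it. Running the argument of \cite{ItoTakimura2020b,Kindred2020} backwards then gives $1-\chi_{\max}(L)\le u^-(L)-[m]$ and the matching inequality. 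Combining with Step 1 yields $\min\{C(L),2g(L)\}=B(K)+1=u^-(K)-[m]+1=u^-(L)-[m]$.

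\textbf{Main obstacle.} I expect the hardest part to be the lower bound in Step 2, together with reconciling the count of $\Sj$ with the $+1$. An optimal sequence for $L$ need not begin at $c$, and its unique $\Sj$ may occur at a different inter-component crossing, or late in the sequence. I would first commute $\Sj$ to the front, using the fact that splices at distinct crossings commute as operations on states. Then I would show that $\Sj$ at any inter-component crossing yields a knot whose $\chi_{\max}$ is at most $\chi_{\max}(K)$, using Step 1 applied to that crossing. The remaining danger is that the knot reached in this way has a different number of negative factors, which would break the $[m]$ term. That is where the argument of \cite{ItoTakimura2020b} on negative factors has to be re-run rather than quoted.
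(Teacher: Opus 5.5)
\textbf{Verdict: there is a genuine gap.} Your Step~1 is sound, and it is a cleaner route than the paper's. You sort the states of $D$ by the smoothing chosen at $c$; both smoothings of an alternating diagram are again alternating, so Adams--Kindred applies to $D$, $D_K$ and $D_{K'}$. This gives $\chi_{\max}(L)=\chi_{\max}(K)-1$ directly. The paper instead first inserts two $2$-gons (Lemma~\ref{lem:plusBigon}) and then argues via the Adams--Kindred algorithm that $\Sj$ at an inter-component $2$-gon is $\chi$-maximizing (Lemma~\ref{lem:Inter2gon}). Your version also uses the hypothesis on $K$ versus $K'$ directly.

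The problem is the bookkeeping around Step~1, which contains two off-by-one errors that cancel each other:
\begin{itemize}
\item For $n=2$ one has $\min\{C(L),2g(L)\}=-\chi_{\max}(L)$, exactly as your own parenthetical computation shows. So Step~1 yields $\min\{C(L),2g(L)\}=1-\chi_{\max}(K)=B(K)$, not $B(K)+1$.
\item Definition~\ref{def:uL} counts only splices of type $S^-$. The $\Sj$ at $c$ is therefore free, and your construction gives $u^-(L)\le u^-(K)$. The extra $1$ belongs to the $u^-_2$ convention (Remark~\ref{rmk:uu2}); the paper uses $u^-(D_L)=u^-(D_K)$, see (\ref{eq:uDLDK}).
\end{itemize}
The torus link $4^2_1$, with $K$ the unknot and $K'$ the trefoil, satisfies the hypothesis and refutes both claims: $\min\{C(L),2g(L)\}=0$ while $1-\chi_{\max}(L)=1$, and $u^-(L)=0=u^-(K)$. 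For the same reason your lower bound $1-\chi_{\max}(L)\le u^-(L)-[m]$ is false: with the correct normalization it contradicts the equality you are proving.

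Once these are corrected, three real gaps remain.

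\emph{The correction term.} $B=1-\chi_{\max}$ is additive over the factors of an alternating connected-sum diagram, since $\chi_{\max}(K)=\sum_i\chi_{\max}(K_i)-(\ell-1)$. Summing $B(K_i)\in\{u^-(K_i),u^-(K_i)-1\}$ therefore gives $B(K)=u^-(K)-m$, not $u^-(K)-[m]$. The ``saving'' for two negative factors is a property of $C$, not of $\min\{C,2g\}$. Your corrected chain reads $\min\{C(L),2g(L)\}=u^-(K)-m$. For $m\ge 2$, reaching $u^-(L)-[m]$ would require $u^-(L)=u^-(K)-1$, i.e.\ a splice sequence for $L$ that beats ``$\Sj$ at $c$, then an optimal sequence for $K$''. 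Nothing in your plan produces one. You cannot borrow it from the paper either: there $[m]$ enters only through the asserted identity (\ref{eq:SvsSuM}), and the same per-factor count turns that identity into $|S_{D_K}|=|S_u|+m$.

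\emph{The lower bound.} On any diagram of $L$, a sequence with one $\Sj$, $k$ splices $S^-$ and $t$ moves $\ri^-$ has $1+k+t$ crossings and ends with $1+t$ circles. It therefore gives only $u^-(L)\ge-\chi_{\max}(L)$. This count does not depend on where the $\Sj$ sits, so commuting it to the front is unnecessary. The extra $1$ needed when $m=1$ requires two further facts:
\begin{itemize}
\item every $S^-$ re-attaches a band to a single boundary circle without increasing the number of boundary circles, hence forces non-orientability, so $u^-(L)\ge C(L)$;
\item $C(L)=2g(L)+1$ when $K$ is a negative prime knot.
\end{itemize}
That is genuine Adams--Kindred input, not ``running the argument backwards''.

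\emph{The upper bound.} Changing the diagram of $K$ is not harmless. An isotopy of $K$ drags the band at $c$ along and creates new crossings in the diagram of $L$, which must also be spliced. You need $u^-(K)$ to be attained on the diagram $D_K$ coming from $D$ (after the $\ri^-$-reductions), which is what Framework~\ref{framework} relies on.
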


Before proceeding further, we briefly review some literature on knot/link genera, both orientable and especially non-orientable.  
The problem to determine orientable/non-orientable knot genus has a long history.  In the orientable setting, Seifert \cite{Seifert1935} provided a method for constructing orientable surfaces bounded by a knot.   In the four-dimensional case, Milnor famously conjectured the genus of knots arising from complex algebraic curves, known as the Milnor conjecture.  Over time, a general theory has developed in the orientable knot genus, supported by systematic techniques and theoretical progress.  

In contrast, for non-orientable surfaces measured by the \emph{crosscap number}, also known as the non-orientable genus,  no general approach via equality-type formulas had been available for a long time.     
Since the study of surfaces in complements of two-bridge knots by Hatcher and Thurston \cite{HatcherThurston1985}, computations of crosscap numbers have been carried out for specific classes of knots: the torus knots (Teragaito \cite{Teragaito2004}), the two-bridge knots (Hirasawa-Teragaito \cite{HirasawaTeragaito2006}), and many pretzel knots (Ichihara-Mizushima \cite{IchiharaMizushima2010}).  

On the other hand, general bounds for arbitrary knots have been developed.  For example, Murakami-Yasuhara \cite{MurakamiYasuhara1995} gives strong upper bounds in terms of the minimal crossing number $n(K)$, and Kalfagianni-Lee \cite{KalfagianniLee2016} uses the twist number $t(D)$ of a knot diagram $D$ to obtain two-sided bounds.  That is, the crosscap number also exhibits a strong connection to quantum knot polynomial such as the Jones polynomial.  For example, if the knot is alternating, $n(K)$ coincides with the span of the Jones polynomial, and $t(D)$ is equal to the sum $|a_{M-1}|+|a_{m+1}|$, where $a_{M-1}$ and $a_{m+1}$ are the coefficients of the second-highest and second-lowest degrees of the Jones polynomial, respectively.  A relatively general equality-type result has become available in a recent year through the independent works of Ito-Takimura \cite{ItoTakimura2020v} and Kindred \cite{Kindred2020}, which gave exact formulas for the crosscap number of the alternating knots.  
In this paper, we extend this equality-type approach to the setting of two-component alternating links.  

In what follows, we review a statement from \cite{ItoTakimura2020v, Kindred2020} (Fact~\ref{thm:ItoTakimuraUC}), then present the main result (Theorem~\ref{thm:uL}) of this paper, which extends their formula to the case of two-component alternating links.  

This paper not only extends the splice-unknotting number to two-component links, but also obtains a Clark-type inequality for arbitrary links.    
For split links, one can obtain a Clark-type bound by applying original Clark inequality 
to each knot component and accounting for the change of the Euler characteristic
when passing to a connected spanning surface.  Further, for split two-component links, the result of Zhang~\cite{Zhang2008} obtains 
sharper estimates\footnote{Her paper is written under a different convention for the crosscap number, see Remark~\ref{rmk:NotationCap}.}.  For the non-splittable-link case, however, such a reduction to knot components
is no longer available.
Under the convention as in Definition~\ref{def:genus}, which measures genus via the Euler characteristic
of connected spanning surfaces,
we record a Clark-type inequality for links in this setting (Proposition~\ref{prop:ExtendClark}).

We conclude this section with the outline of this paper.   In Section~\ref{sec:DefNot}, we present the necessary definitions and notations.  
In Section~\ref{sec:statements}, we state a more detailed version of Theorem~\ref{thm:ItoTakimuraUC} of Section~\ref{sec:Intro} along with Proposition~\ref{Subprop}, summarizing the main results of this paper.  In Section~\ref{sec:proofProp}, we  prove  Proposition~\ref{Subprop}, and in Section~\ref{sec:proof} we prove  Theorem~\ref{Submainthm}.  Section~\ref{sec:table} presents a table of computed values, and  Section~\ref{sec:example} illustrates our results with figures.    
\section{Definitions and notations}\label{sec:DefNot}
\begin{definition}
Given a link diagram, an operation as shown in Figure~\ref{fig:splice1} is called a \emph{splice} of a crossing.  Note that there are exactly two splices on each crossing of a link diagram.  
\begin{figure}[htpb]
    \includegraphics[width=10cm]{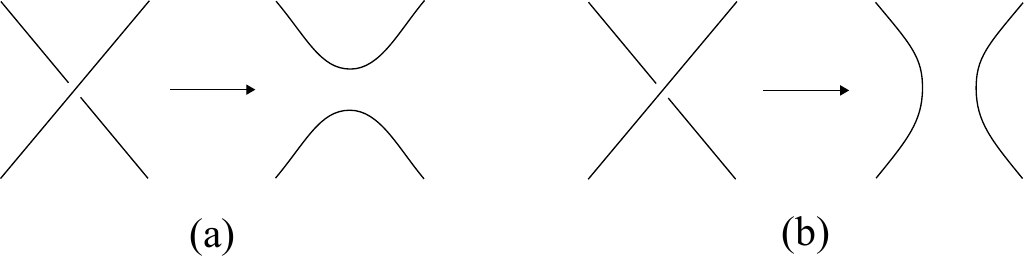}
    \caption{Splices}
    \label{fig:splice1}
\end{figure}
\end{definition}

\begin{definition}\label{def:splice}
If a deformation at a crossing $c$ as in Figure~\ref{fig:splice2} ~(a) is not Figure~\ref{fig:splice2} ~(d), it is called a splice of type $S^-$.  If the crossing $c$ consists of two paths, each of which belongs to the same component, the number of components is unchanged and the splice is called a splice of type $S^-$, or simply  an $S^-$.    
If the crossing $c$ consists of different components, the number of components is decreased and the splice is called a splice of type $\Sj$; we also denote it by $\Sj$ simply.  
After applying an $S^-$-splice we may reorient the resulting diagram arbitrarily, since our arguments do not depend on the choice of orientation.  
Let \(T_\text{split}\) be a splice as in Figure~\ref{fig:splice2}~(b), which preserves the orientation outside  the neighborhood where the splice is applied.  If we apply a single \(T_\text{split}\) to a one-component diagram, the number of components increases.    
Figure~\ref{fig:splice2}(c) shows the special case of $\Ts$ that splits off a $1$-gon.
In the counting arguments below, each $\ri^-$-reduction is interpreted as producing exactly one such $\Ts$ in the associated state.  The symbol $\ri^-$ denotes a  move as in Figure~\ref{fig:splice2}~(d).  
None of the above splices depends  on the over/under information of any crossing.    
\begin{figure}[htpb]
    \includegraphics[width=\linewidth]{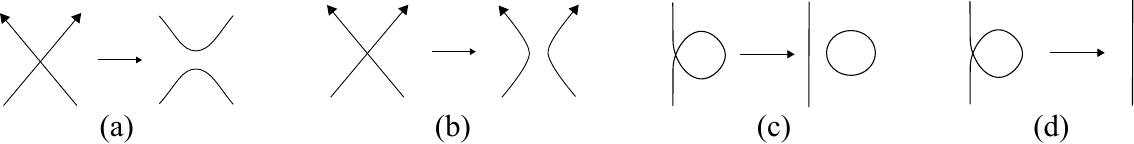}
    \caption{(a) type $S^-$ or $\Sj$, (b) type $\Ts$, (c) type $\Ts$ corresponding to $\ri^-$, (d) type $\ri^-$}
    \label{fig:splice2}
    \end{figure}
\end{definition}

\begin{definition}
A regular projection of a link $L$ into $\mathbb{R}^2$ is called a \emph{link diagram} $D_L$.   For any link diagram $D_L$, 
$S_{D_L}$ denotes the configuration of circles in the plane obtained by choosing a splice at each  crossing of $D_L$.  
This configuration is called a \emph{state} of $D_L$, and each circle in a state is referred to as a \emph{state circle}.   We write $|S_{D_L}|$ for the number of state circles in $S_{D_L}$.   In particular, the state $S_u$ refers to the one obtained from $D_L$ by applying splices that realize its $u^-$ or $u^-_2$.  The surface $\Sigma_u$ is then constructed from $S_u$ by attaching half-twisted bands that recover the original crossings of $D_L$ \footnote{Note that $\Sigma_u$ is not unique, but its Euler characteristic $\chi(\Sigma_u)$ is.} and is called a \emph{state surface}\footnote{The state surfaces have independently been considered previously in Pabiniak, Przytycki, and Sazdanovi\'{c} \cite{PabiniakPrzytyckiSazdanovic2009} and in Ozawa \cite{Ozawa2011}.}.  
Which of $u^-(D_L)$ or $u^-_2(D_L)$ is realized by $\Sigma_u$ will be clear from the context.        
    \begin{figure}[htpb]
\includegraphics[width=12cm]{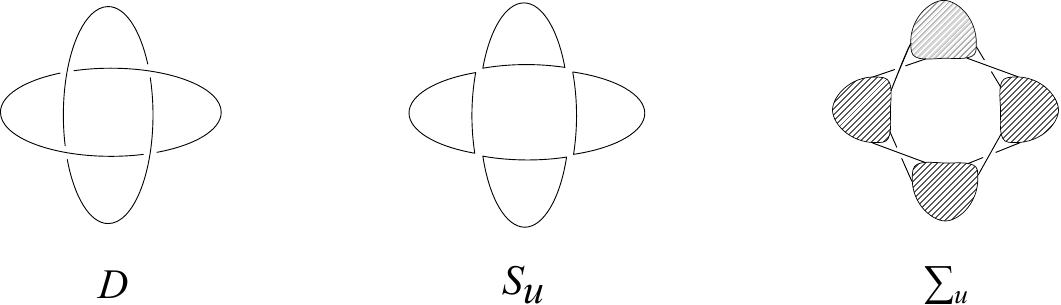}
    \caption{\(D, S_u, \Sigma_u\)}
\end{figure}
If we shall specify $u^-(D_L)$ or $u^-_2 (D_L)$, we may write $\Sigma_{u^-(D_L)}$ or $\Sigma_{u^-_2 (D_L)}$ to present $\Sigma_u$ of $u^-(D_L)$ or $\Sigma_{u^-_2 (D_L)}$.    
\end{definition}    
\begin{definition}\label{def:u}
Let $L$ be a non-splittable two-component link and $D$ an arbitrary diagram of $L$.
Let $\seq(D)$ be a splice sequence consisting only of splices of types $\Sj$, $S^-$, and $\ri^-$
(Definition~\ref{def:splice} and Figure~\ref{fig:splice2})
 to obtain a circle with no crossings from $D$.  
Let $\#S(\seq(D))$ denote the total number of splices of types $S^-$ and $\Sj$ appearing in $\seq(D)$.  
Then we define
\[
u^-_2(D):=\min_{\seq(D)} \#S(\seq(D)),\qquad
u^-_2(L):=\min_D u^-_2(D),
\]
where the minimum is taken over all splice sequences $\seq(D)$ as above and all diagrams $D$ of $L$.
\end{definition}
\begin{remark}\label{rmk:uu2}
For any diagram $D$ of a non-splittable two-component link, we have
\[
u^-_2(D)=u^-(D)+1,
\]
since any splice sequence realizing $u^-(D)$ or $u^-_2(D)$ contains exactly one occurrence of a splice of type $\Sj$ (Lemma~\ref{lem:UniqueSjoin}), and $u^-_2$ counts this splice in addition to the splices of type $S^-$.    
\end{remark}
\begin{definition}[{Adams-Kindred minimal genus algorithm \cite{AdamsKindred2013}}]\label{def:AKalg}
We define an algorithm to obtain a surface whose boundary is a knot.  
    \begin{enumerate}
    \item Find the smallest $m$ for which a given link diagram contains a $m$-gon in the diagram, and choose one of $m$-gons.       
    \item Splice $m$ crossings in the $m$-gon:
        \begin{enumerate}
            \item Case~$m=1$: splice the crossing on this $1$-gon so that this region becomes a circle as in Figure~\ref{fig:AK}~(i).  
            \item Case~$m=2$: splice the crossing on this $2$-gon so that this region becomes a circle as in Figure~\ref{fig:AK}~(ii).  
            \item Case~$m=3$: create two branches.  For one branch, splice the crossing on this $3$-gon so that this region becomes a circle; for the other branch, splice the three crossings the opposite way (Figure~\ref{fig:AK}(iii)).  
        \end{enumerate}
    \item Repeat until each branch reaches a link diagram without crossings.  
    \item Of all state surfaces obtained from these states, choose the one with the maximal Euler characteristic.  
    \end{enumerate}
    \begin{figure}[htpb]
        \centering
        \includegraphics[width=9cm]{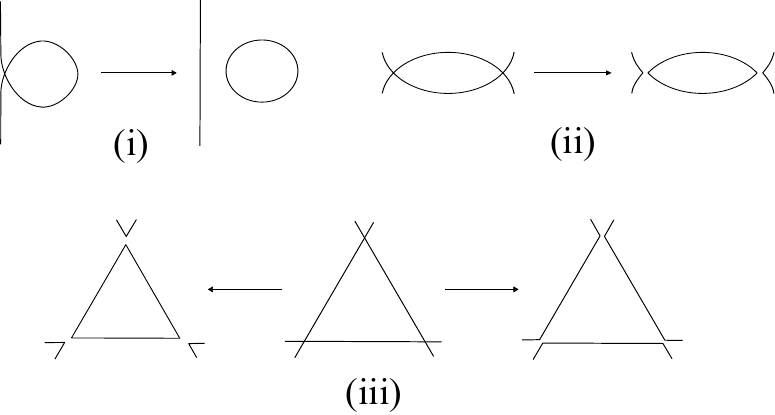}
        \caption{Splices used in Adams-Kindred  algorithm}
        \label{fig:AK}
    \end{figure}  
For a link diagram, let $\Sigma_{AK}$ be a spanning surface obtained from Adams-Kindred algorithm.  Note that there exists a spanning surface $\Sigma_{AK}$ with the maximal Euler characteristic if the diagram is alternating.  
\end{definition}

\begin{definition}
\label{def:BL}
Given a link diagram $D$, we apply  deformations $B$ and splices $\ri^-$ to obtain the circle with no crossings, and the resulting ordered operations gives a sequence.  Let $\seq_B (D)$ denote it.    
Then $\# B (\seq_B (D))$ denotes the number of deformations $B$ in $\seq_B (D)$.  
Let 
    \[B (D) := \min_{\seq_B (D)} \# B (\seq_B (D)), \quad B(L):= \min_{D} B (D). \] 
    \begin{figure}[htpb]
        \centering
        \includegraphics[width=6cm]{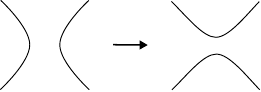}
        \caption{Deformation $B$}
        \label{fig:B}
    \end{figure}
\end{definition}
\begin{remark}
The difference between the two numbers  $B(K)$ and $u^-(K)$ detects negative factors (Fact~\ref{fact:ITB}).  
\end{remark}
\begin{fact}[{Ito-Takimura \cite{ItoTakimura2020b}}]\label{fact:ITB}
For any alternating knot $K$, 
\begin{enumerate}
\item $B(K)=C(K)-1$ if and only if $C(K)=2g(K)+1$, 
\item $B(K)=C(K)$ if and only if $C(K) \neq 2g(K)+1$.  
\end{enumerate}
\end{fact}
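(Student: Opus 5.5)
The plan is to reduce both alternatives of the Fact to the single identity
\[
B(K)=\min\{C(K),\,2g(K)\}
\]
for alternating knots, and then to read off the two cases by elementary surface arguments. Write $\beta(K):=\min\{C(K),2g(K)\}$; for a knot this is the minimal first Betti number $b_1(\Sigma)=1-\chi(\Sigma)$ over all spanning surfaces, orientable or not.

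First we show how the Fact follows from this identity. We always have $C(K)\le 2g(K)+1$, since adding a half-twisted band (a crosscap) to a minimal Seifert surface gives a non-orientable spanning surface with $b_1=2g(K)+1$. If $C(K)=2g(K)+1$, then $\beta(K)=2g(K)=C(K)-1$. Otherwise $C(K)\le 2g(K)$, and then $\beta(K)=C(K)$. Hence $B(K)=\beta(K)$ gives (1) and (2) simultaneously: the two alternatives are exhaustive and mutually exclusive, and $B(K)=C(K)-1$ occurs exactly when $C(K)=2g(K)+1$.

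\textbf{Upper bound $\beta(K)\le B(K)$.} Take a diagram $D$ and a sequence $\seq_B(D)$ attaining $B(D)$. Performing each deformation $B$ and each $\ri^-$ as a splice at the relevant crossings yields a state $S$ of $D$ with a single circle at the end. The associated state surface $\Sigma$ is connected and satisfies
\[
b_1(\Sigma)=\#\{\text{crossings}\}-|S|+1 .
\]
I would check, step by step, how one $B$ and one $\ri^-$ change the quantity $\#\{\text{crossings}\}-|S_{\mathrm{current}}|$. The expected behaviour is:
\begin{itemize}
\item an $\ri^-$ removes one crossing and splits off one circle (the $\Ts$ of Figure~\ref{fig:splice2}(c)), so it contributes $0$;
\item a $B$ contributes exactly $1$.
\end{itemize}
This gives $b_1(\Sigma)=\#B(\seq_B(D))$, and therefore $\beta(K)\le B(D)$ for every $D$.

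\textbf{Lower bound $B(K)\le\beta(K)$.} This is where alternation is used. Take a reduced alternating diagram $D$. By Definition~\ref{def:AKalg} (Adams--Kindred), some state surface $\Sigma_{AK}$ of $D$ has maximal Euler characteristic among all spanning surfaces, so $b_1(\Sigma_{AK})=\beta(K)$. It then remains to rewrite the AK state as a sequence of $\ri^-$ splices and $B$-deformations with exactly $b_1(\Sigma_{AK})$ deformations $B$. I would do this inductively along the branches of the AK algorithm:
\begin{itemize}
\item a $1$-gon step is an $\ri^-$;
\item a $2$-gon step, and each of the two $3$-gon branches, is realized by a bounded number of $B$'s and $\ri^-$'s, each $B$ accounting for one unit of $b_1$.
\end{itemize}

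\textbf{Main obstacle.} I expect the realization step to be the hardest. One must show that the splices prescribed by the AK state can be performed in an order in which every intermediate operation is a legal $B$ or $\ri^-$ (in particular, keeping the diagram connected until the end). One must also verify that the count of $B$'s matches $b_1$ exactly rather than merely bounding it. My first attempt would be to order the splices so that whenever a state circle becomes a $1$-gon it is removed by $\ri^-$, and to show that in a reduced alternating diagram the remaining splice at a minimal $m$-gon with $m\le 3$ always has the form of the move $B$ in Figure~\ref{fig:B}.
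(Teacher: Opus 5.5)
The paper does not prove this statement; it quotes it from Ito--Takimura \cite{ItoTakimura2020b}, so your argument has to stand on its own. Your reduction is correct: by Clark's inequality, items (1) and (2) together are equivalent to the single identity $B(K)=\min\{C(K),2g(K)\}=1-\chi_{\max}(K)$. The gap is that neither inequality is actually proved, and the move-by-move bookkeeping you propose for the ``easy'' one cannot hold if $B$ is a block of splices. Take $K=7_4$, with $g(K)=1$ and $C(K)=3$, so item (1) forces $B(K)=2$. Suppose each $B$ were a block of splices changing $n-|S|$ by exactly $1$. Then a sequence realizing $B(K)$ would give a state surface with $b_1=2<C(K)$. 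That surface would be orientable, hence the Seifert state of its diagram. In a Seifert state every smoothing is the oriented one, which changes the number of curves by $\pm1$: a split at a self-crossing, a merge at a crossing between two curves. So each crossing changes $n-|S|$ by $0$ or $2$, and every block of splices changes it by an even amount, never by $1$. Hence $b_1(\Sigma)=\#B(\seq_B(D))$ fails exactly in the case $C=2g+1$ that item (1) is about. Even the inequality $\beta(K)\le B(D)$ needs the actual definition of $B$ (Figure~\ref{fig:B}) and a global count rather than a move-by-move one.

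The reverse inequality, which carries the real content of the Fact, is left as a plan. You must show that a maximal-$\chi$ state of a reduced alternating diagram is reached by a legal sequence of $B$'s and $\ri^-$'s with exactly $1-\chi_{\max}(K)$ copies of $B$. When $C(K)=2g(K)+1$ that state is the Seifert state, and your suggested mechanism breaks down for two reasons. First, no $S^-$-type step can occur, since the first non-oriented smoothing makes the state surface non-orientable; this is exactly why $u^-(K)-B(K)=1$ for such prime $K$. Second, the first non-kink oriented smoothing of a single curve always splits it into two curves. So neither ``keep one curve until the end'' nor ``each step at a minimal $m$-gon is a $B$ worth one unit of $b_1$'' can hold. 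What is missing is an argument, based on the precise shape of $B$, showing how oriented (Seifert-type) smoothings are packaged into $B$-moves with the right count. Without it the proposal is a correct reformulation plus an outline, not a proof.
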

Fact~\ref{thm:ItoTakimuraUC} and Fact~\ref{fact:ITB} imply Corollary~\ref{cor:uB}.  
\begin{corollary}\label{cor:uB}
For any prime alternating knot $K$, 
\begin{enumerate}
\item $B(K)=u^-(K)-1$ if and only if $C(K)=2g(K)+1$, 
\item $B(K)=u^-(K)$ if and only if $C(K) \neq 2g(K)+1$.  
\end{enumerate}
\end{corollary}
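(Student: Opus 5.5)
The plan is a direct substitution: rewrite both items of Fact~\ref{fact:ITB} using the identity $C(K)=u^-(K)$ from Fact~\ref{thm:ItoTakimuraUC}. No new geometric input is needed; all the content sits in the two cited facts.

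First I check the hypotheses. Let $K$ be a prime alternating knot. Fact~\ref{thm:ItoTakimuraUC} applies, since it only assumes primality and alternation, so $u^-(K)=C(K)$. Fact~\ref{fact:ITB} holds for every alternating knot, so in particular it holds for $K$.

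For item~(1), Fact~\ref{fact:ITB}(1) gives $B(K)=C(K)-1$ if and only if $C(K)=2g(K)+1$. Since $C(K)=u^-(K)$, the equation $B(K)=C(K)-1$ is literally the same as $B(K)=u^-(K)-1$. Hence $B(K)=u^-(K)-1$ if and only if $C(K)=2g(K)+1$.

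For item~(2), Fact~\ref{fact:ITB}(2) gives $B(K)=C(K)$ if and only if $C(K)\neq 2g(K)+1$. The same substitution turns $B(K)=C(K)$ into $B(K)=u^-(K)$, which yields the second equivalence.

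I keep the condition on the right-hand side phrased in terms of $C(K)$ and $g(K)$, exactly as in the statement. This means I only substitute on the left-hand side, and no statement about $2g(K)+1$ versus $u^-(K)$ has to be rederived.

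There is essentially no obstacle. The only point to watch is to invoke Fact~\ref{thm:ItoTakimuraUC} solely under the primality assumption, which is why the corollary is stated for prime knots even though Fact~\ref{fact:ITB} does not need primality. I would also remark that the two cases are complementary: exactly one of $C(K)=2g(K)+1$ and $C(K)\neq 2g(K)+1$ holds, so $B(K)\in\{u^-(K)-1,\,u^-(K)\}$. This dichotomy is the one used in Definition~\ref{def:mdivisor} to define negative factors.
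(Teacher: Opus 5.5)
Your proposal is correct and matches the paper, which obtains the corollary in one line by substituting $C(K)=u^-(K)$ from Fact~\ref{thm:ItoTakimuraUC} into both items of Fact~\ref{fact:ITB}. Your remark that primality is needed only to invoke Fact~\ref{thm:ItoTakimuraUC} is also accurate.
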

Note also Fact~\ref{fact:cSum}.   
\begin{fact}[{Murakami-Yasuhara \cite{MurakamiYasuhara1995}}]\label{fact:cSum}
For any connected sum of two knots $K$ and $K'$, 
$C(K \# K')=C(K)+C(K')$ if and only if $C(K)=\min\{C(K), 2g(K)\}$ and $C(K')=\min\{C(K'), 2g(K')\}$.  
\end{fact}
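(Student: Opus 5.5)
The statement is Fact~\ref{fact:cSum}. I would prove it by comparing the two sides through the auxiliary quantity $\Gamma(K):=\min\{C(K),2g(K)\}$, using $\beta_1(\Sigma)=1-\chi(\Sigma)$ for a connected spanning surface $\Sigma$ of a knot. The plan is to establish two estimates,
\[
C(K\#K')\le \min\{\,C(K)+C(K'),\ 2g(K)+C(K'),\ C(K)+2g(K')\,\}
\quad\text{and}\quad
C(K\#K')\ge \Gamma(K)+\Gamma(K'),
\]
and then read off both directions.

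\emph{Upper bound and the ``only if'' direction.} Take minimal spanning surfaces of $K$ and $K'$, one of each factor possibly orientable, and form their boundary connected sum along the decomposing sphere. The Euler characteristics satisfy $\chi=\chi_1+\chi_2-1$, so first Betti numbers add. This surface is non-orientable as soon as one summand is non-orientable, which gives all three terms of the upper bound. A spanning surface of a knot is always obtained from a minimal Seifert surface by adding a single half-twisted band, so $C(K)\le 2g(K)+1$. Hence if $C(K)\ne\Gamma(K)$, then $C(K)=2g(K)+1$. Substituting into the term $2g(K)+C(K')$ gives $C(K\#K')\le C(K)+C(K')-1$, so additivity fails. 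The case of $K'$ is symmetric.

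\emph{Lower bound and the ``if'' direction.} Let $S$ be a non-orientable spanning surface of $K\#K'$ with $\beta_1(S)=C(K\#K')$. Let $P$ be the decomposing sphere, meeting $K\#K'$ in two points, and isotope $S$ to be transverse to $P$. Then $S\cap P$ consists of one arc joining the two points together with some simple closed curves. I would remove the closed curves by innermost-disk surgery on $P$ and discard any closed components that are created. Each surgery does not decrease $\chi$ of the component containing the knot, so its $\beta_1$ does not increase. Cutting the result along the arc gives spanning surfaces $S_1$ of $K$ and $S_2$ of $K'$ with $\beta_1(S_1)+\beta_1(S_2)\le C(K\#K')$. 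Each $\beta_1(S_i)$ is at least $C$ or $2g$ of its knot according to whether $S_i$ is non-orientable or orientable, hence at least $\Gamma$, which yields the lower bound. Under the hypothesis $\Gamma(K)=C(K)$ and $\Gamma(K')=C(K')$, the two bounds combine to give $C(K\#K')=C(K)+C(K')$.

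\emph{Main obstacle.} The delicate step is the surgery. A compression can turn $S$ into an orientable surface, and it may also split off pieces, so one must check that the component containing the knot still bounds it. I would handle this case separately. If the surgered surface is orientable, then it is an orientable spanning surface of $K\#K'$ with $\beta_1$ at most $C(K\#K')$. Additivity of the orientable genus then gives $2g(K)+2g(K')\le C(K\#K')$, which is still at least $\Gamma(K)+\Gamma(K')$. So the lower bound survives in every case, and the only point needing real care is showing that every surgery is $\chi$-nondecreasing on the relevant component.
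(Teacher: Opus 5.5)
The paper gives no proof of this statement; it is quoted from Murakami--Yasuhara, so there is no in-paper argument to compare against. Your proof is correct and is the standard one. The upper bounds come from boundary-connected sums. The lower bound $C(K\#K')\ge\Gamma(K)+\Gamma(K')$ comes from cut-and-paste along the decomposing sphere $P$, as in the proof of additivity of Seifert genus. Combining the two bounds gives both directions.

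There are three small remarks.

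\emph{The Clark sentence.} Your sentence ``a spanning surface of a knot is always obtained from a minimal Seifert surface by adding a single half-twisted band'' is misstated. What you mean is that \emph{some} non-orientable spanning surface is obtained this way, which is Clark's inequality. You do not actually need Clark for the ``only if'' direction: $C(K)>2g(K)$ already gives $C(K\#K')\le 2g(K)+C(K')<C(K)+C(K')$.

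\emph{The case split is unnecessary.} The split in your ``main obstacle'' paragraph is not needed. $\Gamma$ already bounds the first Betti number of an orientable spanning surface from below by $2g$. So cutting the surgered surface along the arc works whether or not it is still non-orientable, and no separate appeal to additivity of $g$ is required.

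\emph{The points that need checking.} These are the ones you essentially named:
\begin{enumerate}
\item The circles of $S\cap P$ lie in the open disk $P\setminus\alpha$, so innermost disks exist and avoid both $\alpha$ and the knot.
\item A non-separating surgery raises $\chi$ by $2$.
\item A separating surgery splits off a closed surface $F_0$ with $\chi(F_0)\le 2$. Since the Euler characteristics of the two resulting pieces sum to $\chi(S)+2$, the piece containing the knot has Euler characteristic at least $\chi(S)$.
\item Cutting along the single arc $\alpha$ yields exactly two connected pieces, one in each ball, and their first Betti numbers add.
\end{enumerate}
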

Therefore, the difference $B(K)$ and $u^-(K)$ counts the number of negative factors.  
\begin{definition}[Inter-component $2$-gon $\Sj$]
Let $D$ be a two-component alternating link diagram that contains a $2$-gon consisting of two edges belonging to distinct components.  
If a $2$-gon consists of two edges belonging to distinct components, we call it a \emph{inter-component $2$-gon}.  
For $D$, we assign an orientation to this $2$-gon as in Figure~\ref{fig:muki}, in such a way that the disk bounded by the $2$-gon does not admit an orientation compatible with the edge orientations.  By definition, a  splice at any of both crossings of the $2$-gon, in a manner inconsistent with the given  orientation, yields $\Sj$.  
Then we refer to the operation of applying $\Sj$ to a crossing formed by two edges of the inter-component $2$-gon as an \emph{application \(\Sj\) to an inter-component $2$-gon}.  
    \begin{figure}[htpb]
    \centering
    \includegraphics[width=6cm]{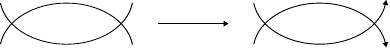}
    \caption{Orientations of an inter-component $2$-gon.  An inconsistent splice yields $\Sj$.}
    \label{fig:muki}
\end{figure}
\end{definition}
\begin{remark}[Convention on $u^-$ and $u^-_2$]\label{rmk:uu2Convention}
By Lemma~\ref{lem:UniqueSjoin}, every splice sequence $\seq(D)$ realizing the minimum in the definition of $u^-(D)$ or $u^-_2(D)$ contains exactly one occurrence of a splice of type $\Sj$.
Here ``exactly one'' refers to the occurrence within the sequence $\seq(D)$, not to a distinguished crossing of $D$.  
Hence, the two invariants $u^-$ and $u^-_2$ differ only by the counting convention:
$u^-_2$ counts splices of types $S^-$ and $\Sj$, while $u^-$ counts only splices of type $S^-$.  
\end{remark}
\section{Main result}\label{sec:statements}
Theorem~\ref{thm:uL} is implied by Theorem~\ref{Submainthm} and the definitions of $u^-(L)$ (Definition~\ref{def:uL}) and $u^-_2 (L)$ (Definition~\ref{def:u})\footnote{We nevertheless arrange formulas for both $u^-$ and $u^-_2$, since a future generalization may require either of them; see  Remark~\ref{rmk:uu2}.   
}.  
\begin{theorem}\label{Submainthm}
Given an alternating two-component link \(L\), let \(D_L\) be an alternating link diagram of $L$.  Then we have  
\begin{align*}
\min \{ C(L), 2g(L) \}&=u^-(D_L)-[m]\\
&=u^-_2(D_L)-1-[m].
\end{align*}
In particular, if $C(L) \le 2g(L)$, 
\begin{align*}
C(L)=u^-(D_L)-[m], \\
C(L)=u^-_2(D_L)-1-[m].
\end{align*}
If $C(L)>2g(L)$, 
\begin{align*}
g(L)=\frac{u^-(D_L)-[m]}{2},\\
g(L)=\frac{u^-_2(D_L)-[m]-1}{2}.
\end{align*}
\end{theorem}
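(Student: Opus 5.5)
The plan is to turn everything into Euler characteristic bookkeeping for state surfaces and then reduce to the knot case through the inter-component splice. For a two-component link $L$, a connected spanning surface $\Sigma$ has $C(L)=-\chi(\Sigma)$ if $\Sigma$ is non-orientable, and $2g(L)=-\chi(\Sigma)$ if $\Sigma$ is orientable. Hence
\[
\min\{C(L),2g(L)\}=-\max\{\chi(\Sigma)\},
\]
where the maximum runs over all connected spanning surfaces of $L$. Since $D_L$ is alternating and $L$ is non-splittable, this maximum is attained by a connected state surface of $D_L$, for instance $\Sigma_{AK}$ from the Adams--Kindred algorithm (Definition~\ref{def:AKalg}). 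So the theorem reduces to computing $\max\chi$ over state surfaces of $D_L$. The second line of the first formula follows from Remark~\ref{rmk:uu2}. The ``in particular'' statements are immediate once we know which surface type realizes the minimum.

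\textbf{Step 1: a splice sequence as a state surface.} I will check that for a splice sequence $\seq(D_L)$ with $s$ splices of type $S^-$, exactly one $\Sj$ (Lemma~\ref{lem:UniqueSjoin}), and $r$ splices of type $\ri^-$, the associated state $S_u$ has $|S_u|=1+r$ circles, and the state surface has $\chi(\Sigma_u)=|S_u|-c(D_L)$. Since $S^-$, $\Sj$ and $\ri^-$ each remove one crossing, $c(D_L)=s+1+r$. Therefore
\[
\chi(\Sigma_u)=-(s+1)+1=-s,
\]
so $-\chi(\Sigma_u)=\#S^-(\seq(D_L))$. This gives $\min\{C(L),2g(L)\}\le u^-(D_L)$, which is the analogue of the easy direction in the knot case.

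\textbf{Step 2: the reverse bound and the correction $[m]$.} I will take an optimal state surface from the Adams--Kindred algorithm. The state circles of the optimal state produce a $1$-gon/$2$-gon/$3$-gon reduction, and some inter-component crossing $c$ is spliced in this state. Splicing $c$ first, by the type that is compatible with the optimal state, produces a knot diagram; among the two choices, this is the knot $K$ with the larger maximal Euler characteristic. The remaining state is then a state of the alternating knot diagram of $K$. Applying $\Sj$ to $c$ changes $\chi$ of the surface by exactly one band, and since $\Sj$ is not counted in $u^-$, the counts agree. This yields
\[
\max\chi(L)=\max\chi(K)\quad\text{and}\quad u^-(D_L)=u^-(D_K),
\]
where I would use the alternating diagram $D_K$ inherited from $D_L$ and minimize over the choice of $c$. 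For $K=K_1\#\cdots\#K_n$ with alternating prime factors, I then apply Fact~\ref{thm:ItoTakimuraUC}, Corollary~\ref{cor:uB} and Fact~\ref{fact:cSum}. These give $\min\{C(K),2g(K)\}=\sum_i u^-(K_i)-[m]$: each negative factor is one with $C(K_i)=2g(K_i)+1$, so at most one of them may keep its crosscap and the others switch to orientable pieces, saving one each. When $m\ge2$ this gives $m-1$. When $m=1$ it gives $1$, because the single negative factor can be summed orientably together with an orientable surface for the rest. Finally I combine this with additivity of $u^-$ over the prime factors of alternating diagrams, which follows from Menasco's result and the fact that a splice sequence splits along the factors.

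\textbf{Main obstacle.} I expect the hardest step to be Step 2, namely showing that some optimal state of $D_L$ splices an inter-component crossing $c$ in the $\Sj$ manner, and that after doing so the optimum restricts to an optimum for $K$ rather than for $K'$. In particular this requires excluding the possibility that an optimal surface for $L$ splices every inter-component crossing in the component-preserving way, which would give a disconnected or non-minimal surface. I would first argue that each inter-component crossing spliced in the $T_{\mathrm{split}}$ manner keeps the two components separated in the state. Non-splittability of the reduced alternating diagram should then force at least one crossing to be joined. The comparison between $K$ and $K'$ should then follow from the hypothesis that $\max\chi(K)>\max\chi(K')$.
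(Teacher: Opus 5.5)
\textbf{Main gap: the correction term.} The step that fails is the derivation of $[m]$ at the end of your Step~2. In $\min\{C(K),2g(K)\}$ orientable surfaces are admitted, so nothing forces a negative factor to keep its crosscap: every negative factor can use its Seifert surface, exactly as in your own $m=1$ argument. Indeed $\min\{C,2g\}=1-\max\chi$. States of the alternating diagram $D_{K_1}\#\cdots\#D_{K_\ell}$ satisfy $|S|=\sum_i|S_i|-(\ell-1)$, which is (\ref{eq:SDKSum}). So Adams--Kindred gives $\max\chi(K)=\sum_i\max\chi(K_i)-(\ell-1)$, hence
\[
\min\{C(K),2g(K)\}=\sum_i\min\{C(K_i),2g(K_i)\}=\sum_i u^-(K_i)-m,
\]
not $\sum_i u^-(K_i)-[m]$. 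A surviving crosscap is needed only for $C(K)$ itself, and only when all factors are negative. Concretely, $7_4$ has $g=1$ and $C=3$, so it is a negative factor. For $K=7_4\#7_4$ one gets $\min\{C(K),2g(K)\}\le 2g(K)=4<5=u^-(K)-[2]$.

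This is not a repairable slip, because it propagates to the statement. Take the non-split alternating link $L=7_4\#7_4\#2^2_1$. Both splices at either Hopf crossing give $K=7_4\#7_4$, so $m=2$. A boundary sum of two genus-one Seifert surfaces and a Hopf annulus gives $\min\{C(L),2g(L)\}\le 4$. On the other hand, the crossings of each $7_4$ summand are self-crossings, and any $\seq(D_L)$ restricts on them to a splice sequence of a $7_4$ diagram. Hence $u^-(D_L)\ge 2u^-(7_4)=6$ and $u^-(D_L)-[m]\ge 5$. What your method can actually deliver is the formula with $m$ in place of $[m]$. The paper's proof has the same defect: it simply asserts $|S_{D_K}|=|S_u|+[m]$ in (\ref{eq:SvsSuM}).

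\textbf{Two further points.} First, you only justify $u^-(D_L)\le u^-(D_K)$. The minimum defining $u^-(D_L)$ runs over every inter-component crossing and both splices there. ``Minimizing over $c$'' may therefore select a splice other than the $\chi$-better knot $K$ at which $m$ is measured; for the other choices your Step~1 only gives $u^-\ge 1-\max\chi$. You need an argument that a $u^-$-optimal $\Sj$ can be placed at the $\chi$-better splice.

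Second, the obstacle you single out is not one. At a crossing between different components, \emph{both} smoothings merge the two components, so every state of $D_L$ splices $c$ in a joining way. Then $\chi=|S|-n$, together with Adams--Kindred for the alternating diagrams $D_K$ and $D_{K'}$, gives at once $\max\chi(L)=\max\{\max\chi(K),\max\chi(K')\}-1$. Note the $-1$: your $\max\chi(L)=\max\chi(K)$ is off by one, and the correct consequence is $\min\{C(L),2g(L)\}=\min\{C(K),2g(K)\}$. This part of your route is cleaner than the paper's. The paper instead needs an inter-component $2$-gon, Lemma~\ref{lem:Inter2gon}, and Lemma~\ref{lem:plusBigon}, and the latter replaces $L$ by a different link.
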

The proof of Theorem~\ref{Submainthm} requires a technical statement
under an additional  assumption, formulated as
Proposition~\ref{Subprop} below.
\begin{proposition}
\label{Subprop}
For any alternating link diagram $D_L$ of a two-component alternating link $L$, suppose that the condition $(\ast)$ holds.  
\begin{center}
$(\ast)$ : $D_L$ has a $2$-gon whose two edges belong to distinct components.  
\end{center}
Under this additional assumption $(\ast)$, the following holds.  
\begin{align*}
\min \{ C(L), 2g(L) \}
&=u^-(D_L)-[m]\\
&=u^-_2(D_L)-1-[m].
\end{align*}

In particular, if $C(L) \le 2g(L)$, 
\begin{align*}
C(L)=u^-(D_L)-[m],\\
C(L)=u^-_2(D_L)-1-[m].
\end{align*}
If $C(L)>2g(L)$, 
\begin{align*}
g(L)=\frac{u^-(D_L)-[m]}{2},\\
g(L)=\frac{u^-_2(D_L)-[m]-1}{2}.
\end{align*}
\end{proposition}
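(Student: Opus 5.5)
The plan is to reduce the statement to the knot case, Fact~\ref{thm:ItoTakimuraUC} and Corollary~\ref{cor:uB}, by applying $\Sj$ at the inter-component $2$-gon guaranteed by $(\ast)$. Let $c_1,c_2$ be the two crossings of this $2$-gon. Applying $\Sj$ at $c_1$ in the inconsistent manner of Figure~\ref{fig:muki} produces a one-component diagram in which $c_2$ becomes a $1$-gon. This $1$-gon is removed by an $\ri^-$, giving an alternating knot diagram $D_K$ of a knot $K$. The other splice at $c_1$ gives $K'$. Since the Adams--Kindred algorithm (Definition~\ref{def:AKalg}) splices a $2$-gon so that its region becomes a circle, the maximal-Euler-characteristic state surface of $D_L$ can be taken to pass through this choice. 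Hence $K$ is the knot of Theorem~\ref{thm:uL}, and
\[\max\chi(L)=\max\chi(K)-1,\]
where $\max\chi$ denotes the maximal Euler characteristic over connected spanning surfaces, of either orientability type, realized by state surfaces. The $-1$ accounts for the single band at $c_1$; the $1$-gon band at $c_2$ does not change $\chi$.

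\textbf{Translating genera.} For a connected surface $\Sigma$ with two boundary components, Definition~\ref{def:genus} gives $C(L)=-\chi(\Sigma)$ for non-orientable $\Sigma$ and $2g(L)=-\chi(\Sigma)$ for orientable $\Sigma$. So
\[\min\{C(L),2g(L)\}=-\max\chi(L).\]
For the knot, $\min\{C(K),2g(K)+1\}=1-\max\chi(K)$. Combining with the previous display,
\[\min\{C(L),2g(L)\}=\min\{C(K),2g(K)+1\}.\]
One must also check that the orientability type of the optimal surface is preserved. Adding the band at $c_1$ to an orientable Seifert surface for $K$ yields an orientable surface exactly when the splice is the oriented one; otherwise the surface is non-orientable. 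Either case is absorbed by the $\min$.

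\textbf{Matching $u^-$.} Next I decompose $K=K_1\#\cdots\#K_n$ into alternating prime factors, using Menasco \cite{Menasco1984}. For each factor, Fact~\ref{thm:ItoTakimuraUC} and Corollary~\ref{cor:uB} give $\min\{C(K_i),2g(K_i)+1\}=u^-(K_i)$. By Fact~\ref{fact:cSum} and the Euler characteristic count for boundary connected sums, $\min\{C(K),2g(K)+1\}$ equals $\sum_i u^-(K_i)-[m]$. Here each negative factor contributes a $-1$ saving beyond the first, since their orientable optimal pieces glue; this is precisely the correction $[m]$ of Definition~\ref{def:mdivisor}.

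It remains to show $u^-(D_L)=u^-(D_K)=\sum_i u^-(K_i)$. The inequality $u^-(D_L)\le u^-(D_K)$ is immediate: prepend $\Sj,\ri^-$ to an optimal sequence for $D_K$, which adds no $S^-$. For the reverse inequality, any sequence for $D_L$ contains exactly one $\Sj$ by Lemma~\ref{lem:UniqueSjoin}. Its state surface therefore has $\chi=1-\#S^--1$, so $\#S^-\ge -\max\chi(L)$ up to the same $[m]$ bookkeeping. Then $u^-_2=u^-+1$ follows from Remark~\ref{rmk:uu2}, and the two "in particular" cases follow from which term realizes the minimum.

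\textbf{Main obstacle.} The hardest step is the lower bound $u^-(D_L)\ge\min\{C(L),2g(L)\}+[m]$. A splice sequence yields a state surface whose $-\chi$ counts $S^-$ plus the one $\Sj$. However, the Ito--Takimura knot equality relates $u^-$ to $C$, not to $\max\chi$, and the discrepancy is exactly the negative-factor phenomenon. I would first try to show that an optimal sequence can be reordered so that $\Sj$ is applied at $c_1$ first. An exchange argument on the $2$-gon should do this: the two crossings of a $2$-gon receive the same splice type in a minimal state. After reordering, the bound reduces verbatim to the knot $K$, where the known results give it including the $[m]$ correction.
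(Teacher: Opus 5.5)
Your outline matches the paper's: splice the inter-component $2$-gon, pass to $D_K$, apply Fact~\ref{thm:ItoTakimuraUC} to the prime factors, correct by $[m]$, and identify $u^-(D_L)$ with $u^-(D_K)$. However, two steps do not hold.

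\textbf{The knot translation is wrong, and the count produces $m$, not $[m]$.} An orientable spanning surface of $K$ has $\chi=1-2g$, so $1-\max\chi(K)=\min\{C(K),2g(K)\}$. By Fact~\ref{FactClark}, $\min\{C(K),2g(K)+1\}$ is just $C(K)$. The two differ exactly when $C(K)=2g(K)+1$, which is the negative-factor case that $[m]$ is meant to handle. Your two displays are also mutually inconsistent: for a prime negative $K$ they combine to $C(K)=u^-(K)-1$, contradicting Fact~\ref{thm:ItoTakimuraUC}. With the correct translation, your own Euler characteristic argument gives
\[
\min\{C(L),2g(L)\}=\min\{C(K),2g(K)\}=\sum_i\min\{C(K_i),2g(K_i)\}=\sum_i u^-(K_i)-m .
\]
This holds because $1-\max\chi$ is additive over the alternating prime factors, and by Corollary~\ref{cor:uB} each negative factor drops by exactly one. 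So the correction that actually comes out is $m$, not $[m]$. Your phrase ``a $-1$ saving beyond the first'' matches neither: it gives $0$ at $m=1$, while $[1]=1$.

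This is not a cosmetic bookkeeping issue. Let $L$ be the Hopf link with a $7_4$ tied into each component, drawn alternating with the Hopf clasp as the inter-component $2$-gon. Then $D_K=D_{7_4}\#D_{7_4}$, $m=2$, $[m]=1$, and $\min\{C(L),2g(L)\}=2g(L)=4$. In any admissible splice sequence, the first splice inside each $7_4$ summand must be an $S^-$ on the untouched summand, since no $1$-gon is available there. So the induced state on that summand is not its Seifert state, and its state surface has $\chi\le 1-C(7_4)=-2$. Adding $\chi\le 0$ for the Hopf part and $-1$ for each of the two summing arcs gives $\chi\le-6$, hence $u^-(D_L)\ge 6$ and $u^-(D_L)-[m]\ge 5\neq 4$. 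The paper obtains $[m]$ only through the identity $|S_{D_K}|=|S_u|+[m]$ in \eqref{eq:SvsSuM}, which is asserted rather than derived. The same factor-by-factor count gives $m$ there too, so you cannot close this step by appealing to the paper.

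\textbf{The lower bound $u^-(D_L)\ge u^-(D_K)$ is not established.} Your state-surface estimate gives only
\[
u^-(D_L)\ge-\max\chi(L)=u^-(D_K)-m,
\]
which falls short by exactly the negative-factor defect. What is missing is an orientability argument, not an Euler characteristic one: a $u^-$-state can never restrict to the orientable optimal state on a negative factor. The proposed exchange argument on the bigon only concerns where $\Sj$ is placed, and even that is not carried out. An optimal sequence might put $\Sj$ at a crossing outside the bigon, or use the other smoothing at $c_1$, after which $c_2$ is no longer a $1$-gon. In the paper this step rests only on Lemma~\ref{lem:UnifiedCounting}(2), whose $D_K$ is defined from a realizing sequence rather than from the bigon.
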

\begin{remark}
Proposition~\ref{Subprop} is a technical form of
Theorem~\ref{Submainthm} with  a specific  assumption.
The proof of Theorem~\ref{Submainthm} consists in reducing the general case
to the setting of Proposition~\ref{Subprop}.
\end{remark}
\section{A proof of Proposition~\ref{Subprop}}\label{sec:proofProp} 
Before proceeding to the proof, we collect the counting identities
for treating $u^-$ and $u^-_2$ uniformly.    Note that the counting in Lemma~\ref{lem:UnifiedCounting} arises going along the plan of Framework~\ref{framework}.   
\begin{lemma}[Counting for $u^-$ and $u^-_2$]
\label{lem:UnifiedCounting}
Let $D_L$ be an alternating diagram with $n$ crossings of a two-component link $L$.  
Fix a splice sequence $\seq(D_L)$ realizing $u^-_2(D_L)$ (resp.~$u^-(D_L)$).  
After the unique splice of type $\Sj$, the diagram becomes one-component, and the remaining part of $\seq(D_L)$ may be viewed as a splice sequence on this resulting knot diagram; hence the same sequence can be used to compare $u^-(D_L)$ and $u^-_2(D_L)$ only by changing the counting convention (Remark~\ref{rmk:uu2Convention}).

Let $D_K$ be the knot diagram obtained after applying the unique occurrence of splice \footnote{The uniqueness of this splice of type $\Sj$ is obtained from Lemma~\ref{lem:UniqueSjoin}.  
} of type $\Sj$ in $\seq(D_L)$ and then eliminating all $1$-gons by $\ri^-$'s.       
Then $\seq(D_L)$ also realizes  $u^-(D_L)$ (resp.~$u^-_2(D_L)$) and the following identities hold.
\begin{enumerate}
\item Exactly one splice of type $\Sj$ appears in each sequence realizing
$u^-(D_L)$ or $u^-_2(D_L)$.
\item The $u^-(D_L)$ and  $u^-_2(D_L)$ satisfy
\[
u^-_2(D_L)=u^-(D_L)+1=u^-(D_K)+1.
\]
\item 
$\# \Ts^{\ri^-}$ denotes the number of splices of type $\Ts$
that correspond to $\ri^-$ (Figure~\ref{fig:splice2}~(c))
and occur in the construction of a state, and $\# {\Ts^{\ri^-}}'$ denotes the number of $\ri^-$-reductions from $D_L$ to $D_K$.
Here the superscript ``$\ri^-$'' is a label indicating correspondence, not an exponent.  

By construction,   
\[
n=\# {\Ts^{\ri^-}}' +u^-(D_K)+1.
\]
\item $|S_u|$ denotes the number of state circles in a state $S_u$ arising from a splice sequence realizing $u^-(D_L)$, and satisfies
\[
|S_u|=\# \Ts^{\ri^-} +1.
\]
\end{enumerate}
\end{lemma}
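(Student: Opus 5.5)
We prove the four items in the order (1), (2), (3), (4), since each uses the previous one. The only tools are the component count along a splice sequence and the definitions of $u^-$ and $u^-_2$ (Definitions~\ref{def:uL} and~\ref{def:u}).

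For item~(1), we track the number of components. Starting from $D_L$ with two components, an $S^-$ and an $\ri^-$ never change the number of components (Definition~\ref{def:splice}), and an $\Sj$ reduces it by one. The terminal diagram is a single crossingless circle, so at least one $\Sj$ must occur. After the first $\Sj$ the diagram has one component, and no crossing can then lie between distinct components, so no second $\Sj$ is applicable. This is the content of Lemma~\ref{lem:UniqueSjoin}, which we invoke.

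Item~(2) then follows directly. For any admissible sequence, $\#S(\seq(D_L))=\#S^-(\seq(D_L))+1$, so the two minimization problems have the same minimizers. Hence $u^-_2(D_L)=u^-(D_L)+1$, and a sequence realizing one also realizes the other, which is the remark in the statement. For the equality $u^-(D_L)=u^-(D_K)$, we split a minimizing sequence at its unique $\Sj$. The $\ri^-$'s do not count, so deleting 1-gons in $D_K$ costs nothing.
\begin{enumerate}
\item To show $u^-(D_L)\ge u^-(D_K)$: the $S^-$-splices performed before the $\Sj$ involve crossings disjoint from the $\Sj$ crossing. Commuting them past the $\Sj$ produces a sequence on $D_K$ with the same number of $S^-$'s.
\item To show $u^-(D_L)\le u^-(D_K)$: prepend the $\Sj$ and the $\ri^-$'s to any sequence realizing $u^-(D_K)$.
\end{enumerate}
The main obstacle is the commutation step. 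Performing an $S^-$ first could in principle change which 1-gons appear, and so change which moves count as $\ri^-$. I would handle this by fixing the sequence realizing the minimum and \emph{defining} $D_K$ relative to it, as the statement does. Commuting splices at distinct crossings preserves the final state, and the set of $S^-$ crossings is unchanged, so the count is preserved.

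For item~(3), every crossing of $D_L$ is removed by exactly one move of the sequence, namely an $\Sj$, an $S^-$, or an $\ri^-$. Reading $\# {\Ts^{\ri^-}}'$ as the total number of $\ri^-$-moves in the realizing sequence (those from $D_L$ to $D_K$ together with those used later), the crossings split as
\[
n=\# {\Ts^{\ri^-}}'+\#S^-(\seq(D_L))+1=\# {\Ts^{\ri^-}}'+u^-(D_K)+1,
\]
using item~(2). For item~(4), we count circles while building the state $S_u$ crossing by crossing. Each $\ri^-$ is interpreted as a $\Ts$ splitting off a 1-gon, which adds exactly one state circle. Each $S^-$ and the single $\Sj$ leaves the count of circles in the final state unchanged, because they act within one component or merge two components into one. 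Ending at one circle and counting back gives $|S_u|=\#\Ts^{\ri^-}+1$.
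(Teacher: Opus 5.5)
Your proposal is correct and uses the same bookkeeping the paper relies on: component count for (1), the counting convention for (2), one move per crossing for (3), and one extra state circle per $\Ts$ for (4). Your prepend/commute argument for $u^-(D_L)=u^-(D_K)$ supplies what the paper merely asserts, and it closes once you note two things: the $\Sj$-crossing stays inter-component under the earlier $S^-$/$\ri^-$ moves, and a $1$-gon persists until it is resolved, so every reordered intermediate diagram still has one component and the $\ri^-$'s leading to $D_K$ can be pulled to the front at no cost. Your reading of $\#{\Ts^{\ri^-}}'$ in (3) as the total number of $\ri^-$-moves is the right one. It is how the paper actually uses this identity in its Euler characteristic computation (where $n$ is replaced by $\#\Ts+u^-(D_K)+1$), whereas the literal reading, counting only the $\ri^-$'s from $D_L$ to $D_K$, fails as soon as $D_K$ has a crossing, e.g.\ for $5^2_1$.
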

\subsection{Common framework for proofs}\label{sec:Common}
\begin{framework}\label{framework}
By Lemmas~\ref{lem:UniqueSjoin} and \ref{lem:Inter2gon}, we proceed as follows: 
\begin{steps}
\item \label{step:J}
Choose an inter-component $2$-gon and apply $\Sj$ at one of the two crossings of it.   
By Lemma~\ref{lem:Inter2gon}, we may choose a realizing sequence for $u^-(D_L)$ (or $u^-_2 (D_L)$) whose splice of type $\Sj$ is exactly one; by Lemma~\ref{lem:UniqueSjoin}, it is the only one splice of type $\Sj$.   
Apply the splice of type \(\Sj\) to an inter-component $2$-gon.  
\item \label{step:ri} Apply successive splices of type $\ri^-$ to eliminate all $1$-gons, obtaining a knot projection of an alternating knot $K$.  
\item \label{step:U} Use a splice sequence of $S^-$'s and $\ri^-$'s realizing $u^-(K)$ (Fact~\ref{RealizingUK}).  
\end{steps}
\end{framework}
\noindent
In the following Sections~\ref{sec:case-u}--\ref{u2Proof}, we apply Framework~\ref{framework} to reduce the computation of the Euler characteristic \(\chi(L)\) to the counting identities in Lemma~\ref{lem:UnifiedCounting}; the distinction between prime and non-prime cases concerns the resulting knot diagram \(D_K\).  
\begin{lemma}[Uniqueness of $\Sj$ in a realizing splice sequence]
\label{lem:UniqueSjoin}
For a fixed two-component link diagram $D$, any splice sequence realizing \(u^- (D) \) or \(u^-_2 (D)  \) contains exactly one occurrence of a splice of type $\Sj$.    
\end{lemma}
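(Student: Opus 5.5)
The plan is to prove two things: at least one $\Sj$ appears in any admissible sequence, and at most one does. Both follow from tracking the number of components of the diagram along $\seq(D)$.

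\emph{Effect of each splice on the component count.} By Definition~\ref{def:splice}, a splice of type $S^-$ is applied at a crossing whose two paths belong to the same component. Oriented appropriately, it produces a diagram with the same number of components. A splice of type $\ri^-$ removes a $1$-gon, that is, it deletes a kink of a single component, so it also leaves the number of components unchanged. A splice of type $\Sj$ is, by definition, applied at a crossing between two distinct components, and it merges them, lowering the count by exactly one. Thus along any sequence in $\seq(D)$, the component count is non-increasing and drops by one precisely at each $\Sj$.

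\emph{Lower bound.} The sequence starts at $D$, which has two components, and ends at a single crossingless circle. The count must therefore drop from $2$ to $1$. Since only $\Sj$ changes it, at least one $\Sj$ occurs. This step needs no minimality.

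\emph{Upper bound.} Immediately after the first $\Sj$, the diagram has exactly one component. Every subsequent crossing has both of its paths on that single component, so no crossing between distinct components exists, and $\Sj$ is not applicable. The later steps are all of type $S^-$ or $\ri^-$, which keep the diagram one-component. Hence no second $\Sj$ can ever occur. This argument holds for every admissible sequence, not only for realizing ones. In particular it holds for sequences realizing $u^-(D)$ or $u^-_2(D)$, which gives exactly one occurrence.

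\emph{Expected obstacle.} The only delicate point is checking that $S^-$ cannot split a component, as an oriented $\Ts$-type smoothing would. This is guaranteed by the definition: $S^-$ is, by definition, the smoothing at a self-crossing that is inconsistent with the orientation, and that smoothing yields a single component. I would also confirm that the $1$-gon smoothing implicit in $\ri^-$ (Figure~\ref{fig:splice2}(c)) discards the small circle rather than keeping it as a component, consistent with the statement that $\ri^-$ is a move removing the kink. With these two observations the proof is complete.
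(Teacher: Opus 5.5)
Your proof is correct and follows essentially the same route as the paper: track the number of components, note that only $\Sj$ changes it (from two to one), and conclude that at least one $\Sj$ is needed and that none is applicable after the first. The paper also reorders the sequence so that all $\ri^-$-moves come at the end, but that step plays no role in the component-count argument. Omitting it is fine, as is your remark that the conclusion holds for every admissible sequence, not only realizing ones.
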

\begin{proof}
Given a two-component link diagram $D$, in order to realize  $u^-(D)$ or $u^-_2 (D)$, $D$ is transformed into a circle $O$ with no crossings by splices of type $S^-$, type $\Sj$, and type $\ri^-$.  
In this proof, we shall use the unified symbol to refer ``type $S$'' letting $S=S^-, \Sj$.   
Let $\seq(D)$ denote such a fixed splice sequence.  

Note that any $\ri^-$ affects only a local disk including a $1$-gon of the diagram.  Thus outside the affected disk, the curve remains unchanged, including its orientation.     
Hence, whenever the application of $\ri^-$ does not obstruct subsequent splices, it may be postponed.  
In particular, it is possible to defer the splices of type $\ri^-$ until after the splices of type $S$ have been applied.  Reordering the sequence $\seq(D)$ by deferring the application of splices type $\ri^-$  yields a new sequence $\seq'(D)$, in which the number of splices of type $S$ remains the same as in $\seq(D)$.  
Thus, it suffices to consider the case where $\ri^-$ splices appear at the end of the sequence, so that the sequence takes the form 
\[
S \cdots S \ri^- \cdots \ri^-.  
\]
Among the splices involved, $\Sj$ is the only one that changes the number of components, whereas $S^-$ and $\ri^-$ preserve the number of components.  Since the goal is to transform a two-component diagram $D$ into a circle $O$, the sequence must include at least one $\Sj$.  

Moreover, once a single splice of type $\Sj$ has been applied, the diagram becomes one component.  As the $\Sj$ splice is defined only for diagrams with at least two components, no further $\Sj$ splices are applicable.  Hence exactly one $\Sj$ occurs in the sequence.  
\end{proof}
\begin{lemma}[Inter-component $2$-gon is $\chi$-maximizing]\label{lem:Inter2gon}
Let $L$ be a $2$-component alternating link.  If an alternating link diagram $D_L$ has an inter-component $2$-gon (with changing the orientation of a link component if necessary), then a splice $\Sj$ applied to the inter-component $2$-gon is one of the splices that obtains a state producing a state surface with the maximal Euler characteristic $\chi(L)$ among spanning surfaces of $L$.  

Further,  for $D_L$, applying $\Sj$ first still obtains a state surface with the maximal $\chi(L)$ among the spanning surfaces of $L$.   
\end{lemma}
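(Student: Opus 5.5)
Since $D_L$ is alternating, the maximal Euler characteristic $\chi(L)$ among spanning surfaces is realized by a state surface, and the Adams--Kindred algorithm (Definition~\ref{def:AKalg}) produces one. The strategy is to show that the algorithm may begin at an inter-component $2$-gon, and that the branch starting with the $\Sj$ splice there loses nothing.

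\textbf{Step 1: first choice of the algorithm.} If $D_L$ has a $1$-gon, the algorithm splices it first. This splice is an $\ri^-$. It does not change the component structure. It also does not destroy the inter-component $2$-gon, except in the degenerate case where the $1$-gon's crossing lies on that $2$-gon; that case cannot occur for edges of distinct components. By induction we may therefore assume $D_L$ has no $1$-gons, so the smallest polygon is a $2$-gon. The algorithm may then choose the inter-component $2$-gon. After orienting the components as in Figure~\ref{fig:muki} (reversing one component if necessary, which does not affect spanning surfaces), the Adams--Kindred splice that turns the $2$-gon region into a circle is exactly the splice inconsistent with that orientation. This splice is an $\Sj$, and the other crossing of the $2$-gon becomes a $1$-gon, to be removed by $\ri^-$.

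\textbf{Step 2: the prescribed splice is not a sacrifice.} Adams--Kindred prove that, for a $2$-gon of minimal size, every maximal-$\chi$ state can be modified into one in which this $2$-gon is spliced into a circle, without decreasing $\chi$. I would reproduce that local argument. Suppose a state splices both crossings of the $2$-gon the other way. Then switching both crossings changes the number of state circles by $\geq 0$, because the two crossings bound a bigon region. Suppose instead the two crossings are spliced differently. Then, since the diagram is alternating and the $2$-gon is a twist region, switching one crossing to agree with the circle-forming choice does not reduce $|S|$. Applying this exchange yields a $\chi$-maximal state in which the $\Sj$ at the $2$-gon occurs. Since state splices commute, this splice can be placed first in the sequence, which gives the second assertion.

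\textbf{Step 3: connectedness and the main obstacle.} The surface must remain a valid spanning surface of $L$, connected since $L$ is non-splittable. A state surface of a connected diagram is connected, and Step~2 only alters state circles locally, so this holds automatically. I expect Step~2 to be the main obstacle: one must verify the Adams--Kindred exchange argument in the link setting, and check that the state obtained from the circle-forming splice is really the one reached by $\Sj$ rather than by an $S^-$. This is exactly where the orientation convention of Figure~\ref{fig:muki} and the distinct-components hypothesis enter. I would handle it by a case check on the orientations of the two strands through the $2$-gon.
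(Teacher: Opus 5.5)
Your proposal is correct and takes essentially the same route as the paper. The paper's argument has the same four ingredients: the $1$-gon steps of the Adams--Kindred algorithm never touch the inter-component $2$-gon; its circle-forming splice is the $\Sj$ (after reorienting a component); commutativity of splices lets you apply it first; and the Adams--Kindred maximality theorem gives $\chi(L)$. Your Step~2 exchange argument replaces the paper's bare citation of the algorithm's non-branching $m=2$ step with a valid self-contained check: switching a mixed pair adds exactly one state circle, switching an opposite pair changes $|S|$ by $0$ or $+2$, and since this works at any bigon it even makes the $1$-gon preprocessing of Step~1 unnecessary.
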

\begin{proof}
Since the inter-component $2$-gon consists of two crossings between different components, it cannot be a $1$-gon crossing, i.e., a crossing between two paths on the same component.  Hence, if the Adams-Kindred algorithm (Definition~\ref{def:AKalg}) is applied to $D_L$ with the minimal $m$-gons, the inter-component $2$-gon still exists after completing the step for $m=1$, i.e., the step for $1$-gons.    Therefore, at the step for $m=2$, a crossing corresponding to the claimed $\Sj$ appears, which contributes to a state obtaining a spanning state surface with the maximal Euler characteristic.    

Reviewing this splicing process, the sequence of splices is written as follows: 
\begin{enumerate}
\item Apply $m=1$ steps, which do not involve the inter-component $2$-gon, \label{lem:step:m=1}
\item Apply $\Sj$ on a crossing of the two crossings of the inter-component $2$-gon, \label{lem:step:sj}
\item Apply the remaining splices starting from $m=1$ steps.   \label{lem:step:remain}
\end{enumerate}
If we instead apply (\ref{lem:step:sj}) $\Sj$ at the beginning, followed by (\ref{lem:step:m=1}) and (\ref{lem:step:remain}) in order, we have the same state surface of the maximal Euler characteristic.  
 
By Fact~\ref{FactAKMax}, such a state surface realizes the maximal Euler characteristic
among all spanning surfaces of $L$, 
which completes the proof.  
\end{proof}
\begin{fact}[Adams--Kindred {\cite{AdamsKindred2013}}]\label{FactAKMax}
For any alternating link, a state surface with the maximal Euler characteristic
realizes the maximal Euler characteristic among all (state or nonstate) spanning surfaces.
\end{fact}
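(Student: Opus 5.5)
The statement just proved is Fact~\ref{FactAKMax}, which is a citation of Adams--Kindred \cite{AdamsKindred2013}. Rather than reprove it, I would indicate the structure of their argument. The approach is induction on the crossing number of a reduced alternating diagram $D$. Let $\Sigma$ be any spanning surface of $L$, possibly non-orientable or disconnected. We may assume $\Sigma$ has no closed components and is incompressible and boundary-incompressible, since compressing only increases $\chi$.

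\textbf{Step 1: a normal form for $\Sigma$.} Isotope $\Sigma$ into a standard position relative to Menasco's crossing-ball structure for the alternating diagram, with bubbles at the crossings and upper and lower hemispheres $S^2_\pm$. Then use Menasco's standard position arguments, or equivalently the Aumann/Menasco--Thistlethwaite style analysis of how $\Sigma$ meets the projection sphere. We want $\Sigma$ to meet each crossing ball in a standard saddle, or to be disjoint from it.

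\textbf{Step 2: the smallest $m$-gon.} Let $R$ be a region of $D$ whose boundary has the minimal number $m$ of edges. In a reduced alternating diagram an Euler-characteristic count on the projection graph gives $m\le 3$. The key local claim is that a $\chi$-maximal $\Sigma$ can be modified, without decreasing $\chi$, so that near $R$ it agrees with one of the splicings used in Definition~\ref{def:AKalg}:
\begin{itemize}
\item for $m=1,2$, the splicing that turns $R$ into a state circle;
\item for $m=3$, either that splicing or the opposite splicing of all three crossings.
\end{itemize}
This is argued by examining the arcs of $\Sigma\cap S^2_\pm$ over $R$ and cutting and pasting across the disk bounded by $R$.

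\textbf{Step 3: induction.} After the local replacement, the surface restricted to the complement of the spliced crossings is a spanning surface of the diagram with those crossings smoothed. That diagram is again alternating, possibly split or with nugatory crossings to be removed. The Euler characteristic changes by exactly the number of half-twisted bands removed, matching the state surface count. By induction its $\chi$ is bounded by that of a state surface produced along some branch of the algorithm. Reattaching bands shows $\chi(\Sigma)\le\chi(\Sigma_{AK})$ for some branch, and $\Sigma_{AK}$ is itself a state surface.

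The main obstacle is the local claim of Step 2 in the triangle case. There both splicings can be forced by different surfaces, which is exactly why the algorithm branches. I would try first to exhaust the finitely many arc configurations of $\Sigma\cap S^2_\pm$ over a minimal triangle using the alternating hypothesis. Since the statement is cited here, the paper itself will simply invoke \cite{AdamsKindred2013}.
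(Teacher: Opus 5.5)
Your proposal matches the paper: the paper gives no proof of this Fact and imports it from Adams--Kindred \cite{AdamsKindred2013}, which is exactly what you end up doing. Your outline (standard position, a smallest $m$-gon with $m\le 3$, local replacement near it, then induction along the branches of the algorithm) is only background that the paper does not use, and its real content, the local claim for the smallest $m$-gon (especially triangles), is left to the reference in both cases.
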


\begin{notation}
We reinterpret steps as a process to construct a state $S_u$ in the context of computing  $u^-(L)$, by reading each $\ri^-$ as a $\Ts$, which contributes to $\# \Ts^{\ri^-}$ or $\# {\Ts^{\ri^-}}'$.    
We define the following: 
\begin{enumerate}
\item $\# \Ts$ : the number of splices of type $T_{\text{split}}$ in a sequence realizing $u^- (L)$, 
\item \(\# {\Ts^{\ri^-}}'\) : the number of splices of type $\ri^-$ from $D_L$ to $D_K$, and 
\item $\# \Ts^{\ri^-}(D_K)$ : the number of splices of type $\ri^-$ in a sequence realizing $u^-(D_K)$.  
\end{enumerate}
\end{notation}

\paragraph{\bf Counting identities.}
Let $n$ be the number of a given diagram $D_L$ of an alternating link $L$.  

\noindent For $u^-$:
\begin{enumerate}
\item $|S_u|$ counts state circles as  one  plus $\# T_{\text{split}}$;   
\item $\# T_{\text{split}}$ decomposes into contributions from $D_K$ and the intermediate step (\ref{step:ri}) from $D_L$; 
\item $n$ equals the total splices contributing to $\# T_{\text{split}}+u^-(D_K)$ with a single $\Sj$; 
\item $u^-(D_K)$ is the number of $S^-$, which equals to $u^-(D_L)$ (Definition~\ref{def:uL}, \cite{Itotakimura2018, ItoTakimura2020v}).    
\end{enumerate}
\begin{align}
|S_u| &= \# \Ts^{\ri^-} + 1.  \label{eq:SuCount}\\
\#\Ts &= \#\Ts^{\ri^-}(D_K) + \#{\Ts^{\ri^-}}' .  
\label{eq:SplitSumU}\\
n &= \#{\Ts^{\ri^-}}' + u^-(D_K) + 1. 
\label{eq:nDecompU}\\
u^-(D_L)&=u^-(D_K).
\label{eq:uDLDK}
\end{align}
This (\ref{eq:uDLDK}) follows from Lemma~\ref{lem:UnifiedCounting}(2).    

For $u^-_2$: 
Similar reasons apply, except that $u^-_2 (D_L)$ differs by one because it includes the initial $\Sj$, which is an application to an inter-component $2$-gon.    
\begin{align}
\# \Ts &= \# \Ts^{\ri^-}(D_K) + \# {\Ts^{\ri^-}}' .
\label{eq:SplitSumU2}\\
n&= \# {\Ts^{\ri^-}}' +u^-_2(D_L).
\label{eq:nDecompU2}\\
u^-_2(D_L) &= u^-(D_K)+1 \qquad (Lemma~\ref{lem:UnifiedCounting}~(2)).  
\label{eq:u2DLDK}
\end{align}
Connected-sum case: 
Counting circles is adjusted by $\ell -1$ due to overlaps, and negative factors correspond to the nonnegative integer $[m]$.  
\begin{align}
|S_{D_K}|&=|S_{D_{K_1}}| + |S_{D_{K_2}}| + \cdots + |S_{D_{K_{\ell}}}|-(\ell-1).   \label{eq:SDKSum} \\
\# \Ts^{\ri^-}&=\#\Ts^{\ri^-}(D_{K_1})+\#\Ts^{\ri^-}(D_{K_2}) + \cdots +\#\Ts^{\ri^-}(D_{K_l}).
  \label{eq:TsplitSum} \\
|S_{D_K}|&=|S_u|+[m], 
\label{eq:SvsSuM}
\end{align}
where $[m]$ accounts for the contribution of negative factors as defined in Definition~\ref{def:mdivisor}.  
\begin{fact}[\cite{ItoTakimura2020v}]\label{RealizingUK}
For any knot $K$, 
there exists a splice sequence of $S^-$'s and $\ri^-$'s realizing $u^-(K)$.  
\end{fact}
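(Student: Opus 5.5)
The statement is essentially an attainment statement, so I plan to prove it by showing that the set of admissible sequences is nonempty and then invoking well-ordering of $\mathbb{Z}_{\ge 0}$. Recall that for a knot $K$, $u^-(K)$ is defined in \cite{Itotakimura2018} as a minimum, over all diagrams $D$ of $K$ and all sequences of $S^-$'s and $\ri^-$'s turning $D$ into a simple closed curve, of the number of $S^-$'s. Hence it suffices to show two things. First, for every knot diagram $D$ at least one such sequence exists. Second, the minimum is then attained, which is automatic because the counts are nonnegative integers.

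For existence I will argue by induction on the number $n$ of crossings of a one-component diagram $D$. If $n=0$ there is nothing to do. If $n>0$, pick any crossing $c$ and consider its two splices (Figure~\ref{fig:splice1}). Orient $D$ and traverse it starting from $c$; this splits the curve into two loops $\alpha,\beta$ based at $c$.
\begin{itemize}
\item The orientation-consistent splice separates $\alpha$ and $\beta$, so it gives two components; this is the splice $\Ts$.
\item The orientation-inconsistent splice joins $\alpha$ to $\beta$ with $\beta$ reversed, so it gives one component. By Definition~\ref{def:splice}, this is an $S^-$ unless $c$ bounds a $1$-gon, in which case the same local move is an $\ri^-$ (Figure~\ref{fig:splice2}(d)).
\end{itemize}
In either case we obtain a one-component diagram with $n-1$ crossings, and the induction hypothesis gives a sequence for it. Prepending the chosen move yields a sequence for $D$ consisting only of $S^-$'s and $\ri^-$'s.

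With nonemptiness in hand, the attainment is routine. For each diagram $D$, the number $\#S^-$ is a nonnegative integer bounded by $n$, so $\min_{\seq(D)}\#S^-$ is attained. Taking the minimum over the (nonempty) set of diagrams of $K$ is again a minimum of nonnegative integers, so it is attained at some diagram $D_0$ by some sequence. That sequence realizes $u^-(K)$ and uses only $S^-$'s and $\ri^-$'s.

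The only step needing care is the claim that the inconsistent splice at a self-crossing of a knot diagram keeps one component. I will verify it by the explicit reconnection $\alpha\cdot\bar\beta$ described above. I also need to check that the $1$-gon case is exactly where Definition~\ref{def:splice} replaces $S^-$ by $\ri^-$, so that no crossing is left without an admissible move.
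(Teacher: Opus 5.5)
The paper does not prove this Fact; it cites Ito--Takimura. Your argument is correct for the statement as written. Under the min-over-all-diagrams reading of $u^-(K)$ that you adopt, which parallels Definition~\ref{def:uL}, the Fact is purely an attainment claim. Your induction supplies the one nontrivial ingredient, namely that every diagram admits an admissible sequence. At a self-crossing of a one-component curve, the orientation-inconsistent splice leaves one component, and it is either an $S^-$ or, at a $1$-gon crossing, produces the same curve as $\ri^-$. Well-ordering of $\mathbb{Z}_{\ge 0}$ then does the rest.

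What your route buys is a fully elementary, self-contained proof. What it does not give is any control over the diagram $D_0$ on which the minimum is attained. That control is what the paper actually needs when it invokes the Fact in \ref{step:U} of Framework~\ref{framework} and in Section~\ref{sec:case-u}. There the sequence is run on the specific alternating diagram $D_K$, so what is used is $u^-(D_K)=u^-(K)$.

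This diagram-specific version is false for arbitrary diagrams. A non-alternating unknot diagram on the trefoil projection has no $1$-gon, so its first move must be an $S^-$, giving $u^-(D)\ge 1>0=u^-(\textup{unknot})$. For reduced alternating diagrams of prime alternating knots, the diagram-specific version is the genuinely nontrivial Ito--Takimura/Kindred theorem; it does not follow from well-ordering. So your proof settles the literal Fact, but that Fact is essentially a tautology of the definition, and your argument cannot replace the cited result in the way the paper later uses it.
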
 
\paragraph{Clark's inequality}
\begin{fact}[{Clark \cite{Clark1978}}]\label{FactClark}
For any knot $K$,  
\[
C(K) \le 2g(K) + 1.  
\]
\end{fact}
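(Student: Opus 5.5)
The plan is the classical direct construction: starting from a minimal genus Seifert surface of $K$, I will build a connected non-orientable spanning surface $\Sigma$ with $\partial\Sigma=K$ and $\chi(\Sigma)=-2g(K)$. With the convention of Definition~\ref{def:genus} for $n=1$, namely $C(K)=\min\{1-\chi(\Sigma)\}$, this gives $C(K)\le 1+2g(K)$ at once.

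\textbf{Step 1.} Let $F$ be a connected orientable spanning surface of $K$ realizing $g(K)$. Then $\chi(F)=1-2g(K)$.

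\textbf{Step 2.} Choose a small arc $\alpha\subset\partial F=K$ and a small $3$-ball $B$ meeting $F$ in a disk neighborhood of $\alpha$. Let $M$ be a Möbius band in a small ball next to $\alpha$, disjoint from $F$, whose boundary is an unknot $U$. For instance, take the state surface of the standard one-crossing diagram of the unknot, which is a disk with a single half-twisted band attached.

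\textbf{Step 3.} Form the boundary connected sum $\Sigma=F\natural M$ by joining $F$ and $M$ with a short untwisted band $b$ whose ends lie on $\partial F$ and $\partial M$.

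\textbf{Step 4.} Verify three things.
\begin{enumerate}
\item Since $U$ is unknotted and split from $K$, the new boundary is $\partial\Sigma=K\#U=K$, up to isotopy. The band can be chosen inside the ball $B$, so no extra knotting is introduced.
\item $\Sigma$ contains the core of $M$, an orientation-reversing loop, so $\Sigma$ is connected and non-orientable.
\item Gluing along an arc gives
\[
\chi(\Sigma)=\chi(F)+\chi(M)-1=(1-2g(K))+0-1=-2g(K).
\]
\end{enumerate}

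\textbf{Step 5.} Hence $C(K)\le 1-\chi(\Sigma)=2g(K)+1$.

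Equivalently, one may simply add a single half-twisted band across a small arc of $\partial F$, that is, insert an $\ri^-$-type kink into $K$ and span it. This is the same construction expressed as a modification of a state surface, matching the viewpoint of Figure~\ref{fig:splice2}.

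The only step requiring care is confirming that the boundary of $\Sigma$ is isotopic to $K$ rather than to a different knot. This is handled by keeping the whole modification inside a small ball around $\alpha$, where the added band together with the Möbius band only changes $K$ by a Reidemeister~I move. The other checks are the routine Euler characteristic count and the non-orientability observation above.
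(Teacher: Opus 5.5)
Your proof is correct and matches the paper's argument. The paper only cites Clark for this Fact, but its proof of Proposition~\ref{prop:ExtendClark}, which gives this statement at $n=1$, takes a minimal genus orientable surface, adds a Reidemeister~I kink spanned by a half-twisted band to make it non-orientable, and uses $\chi(\Sigma')=\chi(\Sigma)-1$ to conclude $C\le 2g+1$; your boundary connected sum with a M\"obius band is the same surgery, and you are more explicit than the paper about why the boundary stays isotopic to $K$ and why the surface is connected and non-orientable.
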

We now record a Clark-type inequality for links under the convention adopted in Definition~\ref{def:genus}.    
\begin{proposition}\label{prop:ExtendClark}
For any $n$-component link $L$, let $g(L)$ denote the orientable genus of $L$
and let $C(L)$ be the non-orientable genus defined
via the Euler characteristic of connected spanning surfaces
as in Definition~\ref{def:genus}.  
Then 
\begin{equation}\label{eq:ExtendClark}
C(L) \le 2g(L) + 1.  
\end{equation}
\end{proposition}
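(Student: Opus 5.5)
The plan is to take an orientable spanning surface realizing $g(L)$ and add a single crosscap to it, which yields a connected non-orientable spanning surface. Let $F$ be a connected orientable surface with $\partial F = L$ and genus $g(L)$. Here we use the convention that $g(L)$ is the minimal genus of a connected orientable spanning surface; this is the standard one, and any Seifert surface of a link can be made connected by tubing, which does not change the boundary. Then $\chi(F) = 2 - 2g(L) - n$.

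Next we make $F$ non-orientable without changing its boundary. Choose a small disk $\Delta$ in the interior of $F$. Remove its interior and glue in a Möbius band along the resulting circle; inside a small ball around $\Delta$ this is realized by a standard embedded Möbius band whose boundary is $\partial\Delta$. Equivalently, attach a half-twisted band to $F$ near one boundary point, for instance by a ``positive plus negative'' kink of the boundary; that version changes the boundary only by a Reidemeister I move and so leaves the link type unchanged.

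The resulting surface $\Sigma = (F \setminus \mathrm{int}\,\Delta) \cup \text{M\"obius band}$ has the following properties.
\begin{itemize}
\item It is connected.
\item It is non-orientable, since it contains a Möbius band.
\item It satisfies $\partial\Sigma = L$.
\item Its Euler characteristic is $\chi(\Sigma) = \chi(F) - 1 + 0 = 1 - 2g(L) - n$. Removing an open disk lowers $\chi$ by $1$, and gluing along a circle to a Möbius band, whose Euler characteristic is $0$, adds nothing.
\end{itemize}

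By Definition~\ref{def:genus},
\[
C(L) \le 2 - \chi(\Sigma) - n = 2 - (1 - 2g(L) - n) - n = 2g(L) + 1,
\]
which is the claimed inequality.

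The only delicate point is the convention. We must check that $g(L)$ is measured using connected orientable surfaces, so that $\Sigma$ is connected as Definition~\ref{def:genus} requires, and that the Möbius-band modification is embedded in $\mathbb{R}^3$ without meeting the rest of $F$. Both are routine, because the modification takes place inside an arbitrarily small ball meeting $F$ only in a disk.
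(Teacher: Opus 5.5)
Your strategy is the paper's: take a connected minimal-genus orientable surface $F$ with $\chi(F)=2-2g(L)-n$, add one crosscap so that $\chi$ drops by exactly one, and read off $C(L)\le 2g(L)+1$. Your remark on connectedness is a useful point that the paper leaves implicit. However, the realization you lead with, and defend in your last paragraph, is impossible. Suppose a crosscap could be inserted into the interior of $F$ inside a ball $B$ with $B\cap F$ a disk. Then $\Sigma\cap B$ would be a M\"obius band properly embedded in $B$, with boundary the circle $F\cap\partial B$. Capping it with one of the two disks in $\partial B$ bounded by this circle gives an embedded copy of $\mathbb{RP}^2$ in $\mathbb{R}^3$. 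This contradicts the fact, via Alexander duality, that closed surfaces embedded in $\mathbb{R}^3$ are orientable. So the two constructions are not ``equivalent'' as embedded surfaces. Embeddedness of the interior version is not routine; it is false.

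The version you mention in passing is the correct one, and it is exactly the paper's proof. Attach a half-twisted band to $F$ along two nearby arcs of one boundary component; in a diagram this is a single Reidemeister~I kink of either sign. The resulting surface has the properties you need:
\begin{itemize}
\item it is connected;
\item it is non-orientable, being the boundary-connected sum of $F$ with a M\"obius band;
\item its boundary is isotopic to $L$;
\item it satisfies $\chi=\chi(F)+1-2=\chi(F)-1$, because a band is a disk glued along two arcs.
\end{itemize}
With this construction in place of the interior one, your computation goes through unchanged and the proof is complete.
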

\begin{remark}\label{rmk:ExtendClark}
Proposition~\ref{prop:ExtendClark} justifies Definition~\ref{def:genus}.    
This is because for any $n$-component link,  
the maximal Euler characteristic $\chi(L)$ satisfies
$\chi(L)=2-C(L)-n$ when $C(L)\le 2g(L)$ (equivalently, $C(L) \neq 2g(L)+1$), and
$\chi(L)=2-2g(L)-n$ when $C(L)>2g(L)$ (equivalently,   $C(L)=2g(L)+1$) under the convention of Definition~\ref{def:genus}.  
Thus we use the quantity $\min\{C(L),2g(L)\}$, which naturally encodes
both cases in a unified way.
\end{remark}

\noindent{\it Proof of Proposition~\ref{prop:ExtendClark}.}
Let $L$ be an $n$-component link.  
If $\Sigma$ is an orientable surface of minimal genus $g(L)$ spanning $L$, then $\chi(\Sigma)=2-2g(L)-n$.  Consider  changing the surface to a non-orientable surface by applying the first Reidemeister move on a link diagram of $L$ (see Figure~\ref{fig:splice2}~(d) as a  projection).  By  adding a trivial loop, we add a non-orientable handle to the surface $\Sigma$, and let $\Sigma'$ denote the resulting surface whose $\chi(\Sigma)$ satisfies  $\chi(\Sigma')=\chi(\Sigma)-1=1-2g(L)-n$.  Since $C(L)$ is the minimum number of $2-n-\chi$ where $\chi$ is the maximal Euler characteristic among those of non-orientable surfaces, each of which is bounded by $L$,      
\begin{align*}
C(L) &\le 2-n-\chi(\Sigma') \\
&=2-n-(1-2g(L)-n)\\
&=1+2g(L). 
\end{align*}
\hfill$\Box$
\begin{remark}\label{rmk:NotationCap}
Let $L$ be an $n$-component link.  If $\beta_1 (L)$ denotes the minimal first Betti number among non-orientable surfaces bounded by $L$, then a similar but slightly different inequality holds for $\beta_1 (L)$.  The Euler characteristic $\chi(L)$ realizing $\beta_1 (L)$ satisfies $\chi(L)=1-\beta_1 (L)$.  Hence, taking the same $\Sigma$ and $\Sigma'$ as in Proposition~\ref{prop:ExtendClark}, we have   
\begin{align*}
\beta_1(L) &\le 1-\chi(\Sigma') \\
&= 1 - (\chi(\Sigma)-1)\\
&=2 - \chi(\Sigma)\\
&=2 - (2-2g(L)-n)\\
&= 2g(L) + n,  
\end{align*}
which implies
\begin{equation}\label{eq:BettiClark}
\beta_1(L) \le 2g(L) + n.  
\end{equation}
Since $\chi(L)$ is the Euler characteristic realizing $\beta_1 (L)$, we have $C(L)=2-\chi(L)-n$ and $\beta_1 (L)=1-\chi(L)$, and thus the difference between the two inequalities (\ref{eq:ExtendClark}) and (\ref{eq:BettiClark}) is $n-1$. 
Note that some references use $\beta_1 (L)$ as the definition of $C(L)$, so caution is required.  
\end{remark}
\subsection{Case $u^-$ (prime)}\label{sec:case-u} 
First, we treat the case \emph{$D_K$ is prime} where \(D_K\) is the knot diagram obtained from \(D_L\) by the initial \(\Sj\) (Lemma~\ref{lem:UniqueSjoin}), applied to an inter-component $2$-gon, and the subsequent \(\ri^-\)-reductions.  
Let $n$ be the number of crossings of an alternating diagram $D_L$ of a link $L$.  As (\ref{step:J}) in Framework~\ref{framework}, apply $\Sj$ first at an inter-component $2$-gon  (Lemmas~\ref{lem:UniqueSjoin} and \ref{lem:Inter2gon}), and let $D_K$ be the resulting knot diagram.  Remove all $1$-gons by $\ri^-$'s $\# {\Ts^{\ri^-}}'$ times to obtain prime or a connected sum of prime alternating diagram (\ref{step:ri} of Framework~\ref{framework}).    
By Fact~\ref{thm:ItoTakimuraUC}, we find a splice sequence realizing 
$u^- (D_K)=u^-(K)$ (\ref{step:U} of Framework~\ref{framework}), which is interpreted (by replacing $\ri^-$ with $T_{\text{split}}$) as: 
\begin{equation}\label{eq:KvsU}
|S_{D_K}| = |S_u|,  
\end{equation}
which realizing the maximal Euler characteristic for $K$ 
(for the detail of the reason, see \cite{ItoTakimura2020v}).  

Suppose that, for $D_L$, we have a state $S_L$ implying a corresponding spanning surface  with the maximal Euler characteristic $\chi(L) = |S_L| - n$ (Definition~\ref{def:AKalg}).   
Then, 
\begin{align}
    \chi(L) &= |S_L| - n 
     \nonumber\\
    &= |S_{D_K}| + \# {\Ts^{\ri^-}}' - n \quad(\because {\textrm{the definition of }}D_K) \nonumber\\
    &= |S_u| + \# {\Ts^{\ri^-}}' - n \quad(\because \textup{(\ref{eq:KvsU})}) \nonumber\\
    &= 1 + \# \Ts^{\ri^-}(D_K) + \# {\Ts^{\ri^-}}' - n \quad(\because (\ref{eq:SuCount})) \nonumber\\
    &= 1 + \# \Ts - n \quad(\because  \textup{(\ref{eq:SplitSumU})}) \nonumber\\
    &= 1 + \# \Ts - (\# \Ts + u^-(D_K) + 1 )  \quad(\because  \textup{(\ref{eq:nDecompU})}) \nonumber\\
    &= 1 + \# T_{\text{split}} - (\# T_{\text{split}} + u^-(D_L) + 1 ) \quad(\because (\ref{eq:uDLDK})) \nonumber\\
    &= - u^-(D_L).  \label{eq:MaxEqU}
\end{align} 
If $\Sigma_{AK}$ is non-orientable, then since $L$ has two components,
Definition~\ref{def:genus} gives
\[
\chi(\Sigma_{AK}) = 2 - C(L) - 2 = -C(L).
\]
Together with $\chi(L)=\chi(\Sigma_{AK})=-u^-(D_L)$ obtained above,
this implies $C(L)$ $=$ $u^-(D_L)$.  

If \(\Sigma_{AK}\) is orientable, 
Proposition~\ref{prop:ExtendClark} implies \[2g(L)<C(L) \Leftrightarrow C(L)=2g(L)+1, \]
and then $g(L)=\frac{u^-(L)}{2}$ and $C(L)=u^-(L)+1$ which is realized by replacing \(T_{\text{split}}\) at a crossing with the splice \(S^- \) (e.g.,  Figure~\ref{fig:orientable}).    
Here we will find a surface with the maximal Euler characteristic among non-orientable spanning surfaces of $L$.  The orientable surface \(\Sigma_{AK}\) we have already been found is changed into non-orientable surface by replacing a splice \(T_{\text{split}}\) with \(\Sj\) at the same crossing as Figure~\ref{fig:orientable}.   This single surgery changes the Euler characteristic \(\chi(L)\) by $-1$  
By 
\(\chi(L)-1=-2g(L)-1 = -C(L), \) and \(\chi(L)= -u^-(L)\) (\ref{eq:MaxEqU}), 
\[C(L)-1=u^-(L),  \]
\[ g(L)=\frac{u^-(L)}{2}.\]
\begin{figure}[htpb]
\includegraphics[width=8cm]{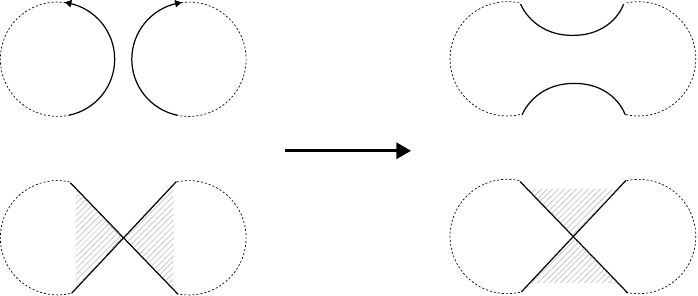}
\caption{Changing a splice type}
\label{fig:orientable}
\end{figure}

\subsection{Case: $u^-$ (non-prime)}
Next, we suppose that \(D_K\) is non-prime, hence decomposes as a connected sum of prime alternating knot diagrams in the sense of Menasco \cite{Menasco1984}.  
If \(D_K\) is non-prime, then $K$ is a connected sum of prime knots.  Moreover, a result of Menasco \cite{Menasco1984}, $D_K$ is the (obvious) connected sum of some prime alternating knot diagrams, each called a \emph{factor}.   If \(D_K\) contains no $1$-gon, then, without loss of generality, we may assume that every factor also contains no $1$-gon (if a factor has a $1$-gon with respect to a crossing $c$, then this $c$ is a nugatory crossing.).  
Let \(\ell\) be the number of factors.   We traditionally represent \(D_K=D_{K_1}\#D_{K_2}\#\cdots\#D_{K_l}\) using the symbol $\#$ to  indicate a connected sum operation.  
For each divisor \(D_{K_i}(1\leq i\leq \ell)\), applying Fact~\ref{thm:ItoTakimuraUC}, we obtain \(\Sigma_u(D_{K_i}),u^-(D_{K_i})\).  
Then \(\Sigma_u(D_K),u^-(D_K)\) is computed as follows: 
\[
\Sigma_u(D_K) = \Sigma_u(D_{K_1}) \# \Sigma_u(D_{K_2}) \# \cdots \# \Sigma_u(D_{K_{\ell}}), 
\]
and, noting the additivity of $u^-$ \cite[Proposition~3]{Itotakimura2018}, 
\[
u^-(D_K) = u^-(D_{K_1}) + u^-(D_{K_2}) + \cdots + u^-(D_{K_{\ell}}).
\]
Note that for each term $u^-(D_{K_i})$, the number of state circle is given by \[|S_{u^-(D_{K_i})}|
=\# \Ts^{\ri^-} +1\]
(see (\ref{eq:SuCount})).  

The number of circles in $S_{D_K}$ is computed using state circles $|S_{D_{K_i}}|$ ($1 \le i \le \ell$) as (\ref{eq:SDKSum}) (note that this formula uses both the number $|S_{D_{K_i}}|$ and the number of factors $\ell$): 
\begin{equation*}
|S_{D_K}|=|S_{D_{K_1}}| + |S_{D_{K_2}}| + \cdots + |S_{D_{K_{\ell}}}|-(\ell-1). 
\end{equation*}
Here, each term $|S_{D_{K_i}}|$ is  the number of state circles in the corresponding to the factor diagram, and the subtraction of $(\ell -1)$ accounts for the overlaps removed when forming the connected sum.  This adjustment ensures that the total count reflects the actual number of circles in the combined diagram.  

Since the number of splices of type $\Ts$ corresponding to $\ri^-$ in a splice sequence realizing  $u^-(D_K)$ equals the sum of those in each splice sequence realizing $u^-(D_{K_i})$ ($1 \le i \le \ell$),  we have (\ref{eq:TsplitSum}):
\begin{equation*}
    \# \Ts^{\ri^-} =\#\Ts^{\ri^-}(D_{K_1})+\#\Ts^{\ri^-}(D_{K_2}) + \cdots +\#\Ts^{\ri^-}(D_{K_l}).  
\end{equation*}
Further, the number $|S_{D_K}|$ of circles in $S_{D_K}$ satisfies (\ref{eq:SvsSuM}):   
\begin{equation*}
    |S_{D_K}|=|S_u|+[m], 
\end{equation*}
where $[m]$ accounts for the contribution of negative factors as defined in Definition~\ref{def:mdivisor}.  

Here, let $n$ be the number of crossings in $D_L$, and let $|S_L|$ be the number of circles in $S_L$ obtained by applying the Adams-Kindred algorithm (Definition~\ref{def:AKalg}) to splice the $n$ crossings.  It is elementary to see that the maximal Euler characteristic $\chi(L)$ of the link $L$ is given by  
$
\chi(L) = |S_L|-n$.  

Hence, we compute: 
\begin{align*}
    \chi(L) &= |S_L|-n 
    \\
    &= |S_{D_K}| + \#{\Ts^{\ri^-}}' - n \\
&\qquad\qquad (\because {\textrm{the definition of $D_K$ from $D_L$ applying $\ri^-$ exactly $\# T_{\textrm{split}}'$ times}})\\
    &= |S_{D_{K_1}}| + |S_{D_{K_2}}| + \cdots + |S_{D_{K_l}}| - (l-1) + \#{\Ts^{\ri^-}}' - n \quad(\because (\ref{eq:SDKSum}))\\
    &=|S_{u^-(D_{K_1})}| + |S_{u^-(D_{K_2})}| + \cdots + |S_{u^-(D_{K_l})}| \\
    &\quad - (l-1) + \#{\Ts^{\ri^-}}' - n + [m]\quad(\because(\ref{eq:SvsSuM}))\\
    &= (1+\#\Ts^{\ri^-}(D_{K_1})) + (1+\#\Ts^{\ri^-}(D_{K_2})) + \cdots + (1+\#\Ts^{\ri^-}(D_{K_l})) \\
    &\quad- (l-1) + \#{\Ts^{\ri^-}}' - n + [m] \quad(\because(\ref{eq:SuCount}))\\
    &= 1 + \#\Ts^{\ri^-}(D_K) + \#{\Ts^{\ri^-}}' - n + [m] \quad(\because(\ref{eq:TsplitSum}))\\
    &= 1 + \# \Ts - n +[m]\quad (\because(\ref{eq:SplitSumU}))\\
    &= 1 + \# \Ts - (\# \Ts + u^-(D_L) + 1)+[m] \quad(\because(\ref{eq:nDecompU}))\\
    &= - u^-(D_L)+[m].
\end{align*}
   For the number $m$ of negative factors $(0 \leq m \leq l)$, we have 
\begin{align*}
    C(L)&=u^-(D_{K_1}) + u^-(D_{K_2})+ \cdots +u^-(D_{K_l})-[m]\\
     &= u^-(D_K)-[m]\\
     &=u^-(D_L)-[m].  
     \end{align*}
     
If \(\Sigma_{AK}\) is non-orientable, 
     then $\chi(L)=-C(L)=-u^-(L)+[m]$.  
  
If \(\Sigma_{AK}\) is orientable, 
Proposition~\ref{prop:ExtendClark} implies \[2g(L)<C(L) \Leftrightarrow C(L)=2g(L)+1, \]
and then $g(L)=\frac{u^-(L)}{2}$ and $C(L)=u^-(L)+1$ which is  realized by replacing \(T_{\text{split}}\) at a crossing with the splice \(S^- \) (e.g.,  Figure~\ref{fig:orientable}).    
 This replacement decreases the number of disks by $1$, which implies that $\chi(L)$ decreases by $1 $.  
 Since 
 \(\chi(L)-1=-2g(L)-1 = -C(L), \chi(L)= -u^-(L)+[m]\), we have 
    \[C(L)=u^-(L) +1 -[m] , \]
   \[ g(L)=\frac{u^-(L)-[m]}{2}.  \]

\subsection{Case: $u^-_2$}\label{u2Proof}
The computation for \(u^-_2\) is reduced to the \(u^-\)-case by 
\[
u^-_2(D_L)=u^-(D_L)+1 \qquad ({\text {Lemma~\ref{lem:UnifiedCounting}(2)}}).
\]
Thus, the maximal Euler characteristic $\chi(L)$ satisfies
\[
\chi(L)=1-u^-_2(D_L)
\]
in the prime case, and by the same reason, we have 
\[
\chi(L)=1-u^-_2(D_L)+[m]
\]
in the non-prime case.  Therefore, the conclusions for \(u^-_2\) follow from the corresponding \(u^-\)-cases proved above, with the same correction term \([m]\) in the connected-sum case.

The arguments for the prime and non-prime cases are the same as those in the $u^-$-case and are therefore omitted.  The details are left to the reader.  

\section{Proof of Theorem~\ref{Submainthm}}\label{sec:proof}
Let $L$ be a non-splittable two-component link.  For an alternating diagram $D_L$ of $L$, $D_L$ contains at least one crossing formed by two distinct components.
Applying the local operation shown in Figure~\ref{fig:tsuika}, there are two possible links.  Then $L'$ denotes the link whose spanning surface has the larger Euler characteristic among them.
For each of $L$ and $L'$, let $\Sigma(\cdot)$ be a spanning surface realizing the maximal Euler characteristic.  Then the following equality holds:
\begin{lemma}\label{lem:plusBigon}
\[\chi(\Sigma(L)) = \chi(\Sigma(L')).\]
\end{lemma}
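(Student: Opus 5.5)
The plan is to compare the state surfaces of an alternating diagram $D_L$ of $L$ with those of the diagram $D_{L'}$ obtained by the local operation of Figure~\ref{fig:tsuika}. I read that operation as inserting one new crossing $c'$ next to the inter-component crossing $c$, so that $c$ and $c'$ bound an inter-component $2$-gon. This reading of the figure, and the assumption that the insertion can be made so that $D_{L'}$ is again alternating, are assumptions of the plan. With $D_{L'}$ alternating, Fact~\ref{FactAKMax} reduces the lemma to showing that the maximum of $|S|-n$ over states $S$ of $D_L$ equals the maximum of $|S'|-(n+1)$ over states $S'$ of $D_{L'}$, where $n$ is the crossing number of $D_L$. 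This is a purely combinatorial statement about states, and I would prove it by sorting the states of $D_{L'}$ according to how the $2$-gon $\{c,c'\}$ is spliced.

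There are two kinds of states of $D_{L'}$.

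\emph{Both crossings spliced so that the $2$-gon becomes a state circle.} Here the $2$-gon region closes off into an isolated circle. The remaining strands look exactly like $D_L$ with $c$ spliced in one fixed direction, namely the direction the $2$-gon splice induces on the two arcs through $c$. Such states therefore correspond bijectively to states $S$ of $D_L$ with $c$ spliced that way, and $|S'|=|S|+1$ while the crossing count rises by one. Hence $|S'|-(n+1)=|S|-n$.

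\emph{At least one crossing of the $2$-gon, say $c'$, spliced the other way.} Splicing one crossing of a two-crossing twist region transversally to the twist leaves a single crossing in the same position. So the resulting diagram is $D_L$ itself, with $c$ playing its old role, and the state circles of $S'$ are those of a state $S$ of $D_L$. Here $|S'|=|S|$ and $n$ crossings remain after discarding the spliced one, which gives the same value of $|S|-n$ up to the one-crossing bookkeeping. I would verify this bookkeeping carefully from the figure: if the spliced crossing counts against $\chi$ without producing a new circle, this case can only lower $\chi$ by one, and then the maximum must come from the first case.

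The key point is that the first case already realizes \emph{every} state of $D_L$ in which $c$ is spliced in the induced direction, with unchanged $\chi$. States of $D_L$ with $c$ spliced the opposite way are exactly what the second case, or the other choice of $L'$ among the two possible links, accounts for. This is why $L'$ is defined as the one with the larger Euler characteristic: among the two insertions, one has its forced direction at $c$ matching the direction used by a $\chi$-maximizing state of $D_L$, which exists by Lemma~\ref{lem:Inter2gon} and Definition~\ref{def:AKalg}. For that insertion the first case gives $\chi(\Sigma(L'))\ge\chi(\Sigma(L))$. Conversely, every state of $D_{L'}$ yields a state of $D_L$ whose state-surface Euler characteristic is at least as large, so $\chi(\Sigma(L'))\le\chi(\Sigma(L))$, and together these give equality.

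The main obstacle I expect is the second case. The claim that a transverse splice at $c'$ returns exactly $D_L$, and not $D_L$ with an extra $1$-gon or with $c$ of the wrong sign, depends on the precise picture in Figure~\ref{fig:tsuika} and on the alternating condition. I would first check this using the orientation convention of Figure~\ref{fig:muki}, treating the two inconsistent splices as $\Sj$. If a kink does appear, I would remove it by an $\ri^-$ and use the corresponding $\Ts$ to recover the missing circle in the count, exactly as in the bookkeeping of Lemma~\ref{lem:UnifiedCounting}.
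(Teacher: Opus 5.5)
There is a genuine gap, and it is in the first step: the operation of Figure~\ref{fig:tsuika} cannot be the insertion of a single crossing. Two arcs in a disk whose endpoints alternate around the boundary cross an odd number of times. So replacing the single crossing $c$ by a two-crossing twist $\{c,c'\}$ changes how the four endpoints are paired. Because the two strands through $c$ lie on different components, the new pairing together with the two outside arcs closes up into one curve. Your $D_{L'}$ is therefore a knot diagram. It has no inter-component $2$-gon and cannot be fed into Proposition~\ref{Subprop}, which is the whole purpose of the lemma. The smallest insertion that keeps two components adds two crossings, turning $c$ into a three-crossing twist with two new $2$-gons. This is why the paper counts $|S_{L'}|=|S_L|+2$ against $n+2$ crossings.

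With the correct operation your second case breaks. A transverse splice at one crossing of a three-crossing twist leaves a two-crossing twist, not $D_L$. The repair is to count the whole twist region at once. Call a splice at a twist crossing \emph{closing} if it closes off the adjacent $2$-gons, and let $j$ be the number of the three twist crossings spliced closing.
\begin{itemize}
\item If $j\ge 1$, the twist pairs its endpoints exactly as $c$ does when spliced the closing way in $D_L$, and it adds $j-1$ small circles. Hence $\chi(S')=\chi(S)+j-3\le\chi(S)$.
\item If $j=0$, the twist pairs its endpoints as $c$ spliced the other way, with no extra circle. Hence $\chi(S')=\chi(S)-2$.
\end{itemize}
Therefore
\[
\chi(\Sigma(L'))=\max\{\chi_{\mathrm{cl}},\,\chi_{\mathrm{op}}-2\},
\]
where $\chi_{\mathrm{cl}}$ and $\chi_{\mathrm{op}}$ are the best values over states of $D_L$ splicing $c$ the closing, respectively opposite, way. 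For the other twist direction the two splices at $c$ exchange roles. So the larger of the two candidates equals $\max\{\chi_{\mathrm{cl}},\chi_{\mathrm{op}}\}=\chi(\Sigma(L))$ by Fact~\ref{FactAKMax}; both modified diagrams stay alternating for a suitable choice of crossing signs.

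With this correction your strategy is sound: a lower bound from the all-closing state in the right direction, an upper bound from the case analysis, and the direction chosen via a $\chi$-maximizing state of $D_L$. It is in fact more careful than the paper's proof, which only exhibits the state with two extra circles and appeals to ``one added crossing adds at most one state circle.''
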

\begin{proof}
Let $|S_\cdot|$ ($\cdot =L$ or $L'$) denote the number of circles of a state $S_\cdot$ of $D_{\cdot}$.  Let $n$ denote the number of crossings in $D_L$.  Then the maximal Euler characteristic of a spanning surface is given by
\[
\chi(\Sigma(\cdot)) = |S_\cdot| - n .
\]
Since $L$ is a non-splittable two-component link, we choose any crossing consisting of two distinct components of $L$, and apply the operation shown in Figure~\ref{fig:tsuika}, which adds two $2$-gons.
For the resulting link $L'$, the corresponding spanning surface $\Sigma(L')$ has exactly two more state circles than $\Sigma(L)$, while the number of crossings also increases by two.
Since adding a single crossing increases the number of state circles  by at most one, the Euler characteristic obtained by this operation is already maximal.
Therefore,
\[
\chi(\Sigma(L')) = |S_{L'}|-(n+2) = (|S_L| + 2) - (n + 2) = |S_L| - n = \chi(\Sigma(L)).
\]
\end{proof}

\noindent{\emph{Proof of Theorem~\ref{thm:uL}}.}
Given a non-splittable two-component link $L$ and an alternating diagram $D_L$, the above construction yields a link $L'$ admitting an alternating  diagram with an inter-component $2$-gon without changing the maximal Euler characteristic.  
Hence, without loss of generality, we may suppose that the given link $L$ admits an alternating diagram with an inter-component $2$-gon.    Then $L$ satisfies the assumption of Proposition~\ref{Subprop}.  

Applying Proposition~\ref{Subprop} to $L$, we obtain the conclusion of Theorem~\ref{thm:uL}.
\hfill$\Box$

\begin{figure}[htbp]
\includegraphics[width=10cm]{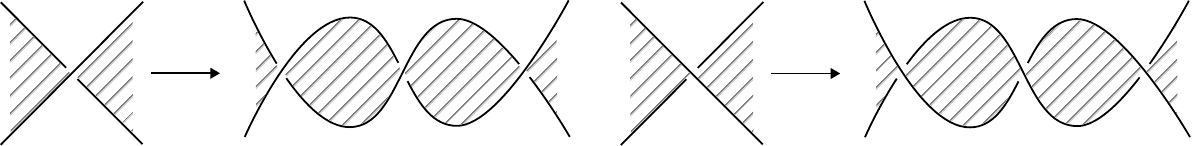}
\caption{Addition of $2$-gons}\label{fig:tsuika}
\end{figure}

\section{Table}\label{sec:table}
Table~\ref{tab:values} lists the values of the splice-unknotting numbers
$u^-(L)$ and $u^-_2(L)$ of links, the crosscap number $C(L)$, and the maximal Euler characteristic
$\chi(L)$ for several two-component prime alternating links from the link table \cite{RolfsenBook2003}.

\begin{table}[htbp]
\centering
\renewcommand{\arraystretch}{1.5}
\begin{tabular}{rrrrr}
\hline
$L$ & $u^-(L)$ & $u^-_2(L)$ & $C(L)$ & $\chi(L)$  \\ \hline
$2^2_1$ & 0 & 1 & 1 & 0  \\
$4^2_1$ & 0 & 1 & 1 & 0  \\
$5^2_1$ & 1 & 2 & 1 & $-1$  \\
$6^2_1$ & 0 & 1 & 1 & 0  \\
$6^2_2$ & 1 & 2 & 1 & $-1$  \\
$6^2_3$ & 2 & 3 & 2 & $-2$  \\
$7^2_1$ & 1 & 2 & 1 & $-1$  \\
$7^2_2$ & 2 & 3 & 2 & $-2$  \\
$7^2_3$ & 2 & 3 & 2 & $-2$  \\
$7^2_4$ & 1 & 2 & 1 & $-1$  \\
$7^2_5$ & 2 & 3 & 2 & $-2$  \\
$7^2_6$ & 2 & 3 & 2 & $-2$  \\ \hline
\end{tabular}
\caption{Splice-unknotting numbers, non-orientable genus, and related values for prime two-component alternating links.  
}
\label{tab:values}
\end{table}

\section{Examples}\label{sec:example}
We illustrate the splice operations and the computation of $u^-(L)$
for a few two-component alternating links.

\begin{figure}[htbp]
    \centering
    \includegraphics[width=\linewidth]{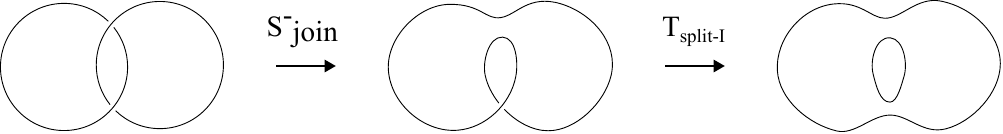}
    \caption{A splice sequence for the link $2^2_1$.  Here we use the notation $T_{\text{split-I}}$ only for typographical convenience, since writing ``$\Ts$ corresponding to $\ri^-$'' above the arrow is impractical in this figure;   no additional operation is intended.  }
    \label{fig:ex-2-1}
\end{figure}

\begin{figure}[htbp]
    \centering
    \includegraphics[width=\linewidth]{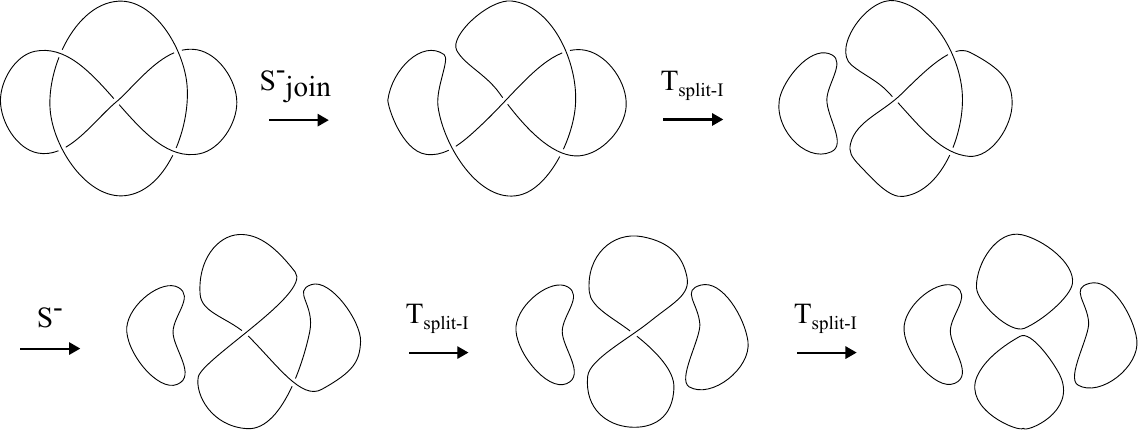}
    \caption{A splice sequence for the link $5^2_1$.  Here we use the notation $T_{\text{split-I}}$ only for typographical convenience, since writing ``$\Ts$ corresponding to $\ri^-$'' above the arrow is impractical in this figure;  no additional operation is intended.  }
    \label{fig:ex-5-1}
\end{figure}

An alternating diagram of the two-component link $7^2_6$ admits no inter-component $2$-gon.
Hence we first add $2$-gons as in Figure~\ref{fig:tsuika} before applying splice operations.
Figure~\ref{fig:7_6max} shows a choice of added $2$-gons that realizes the maximal
Euler characteristic, while Figure~\ref{fig:7_6} shows a modification that does not.

\begin{figure}[htpb]
    \centering
    \includegraphics[width=\linewidth]{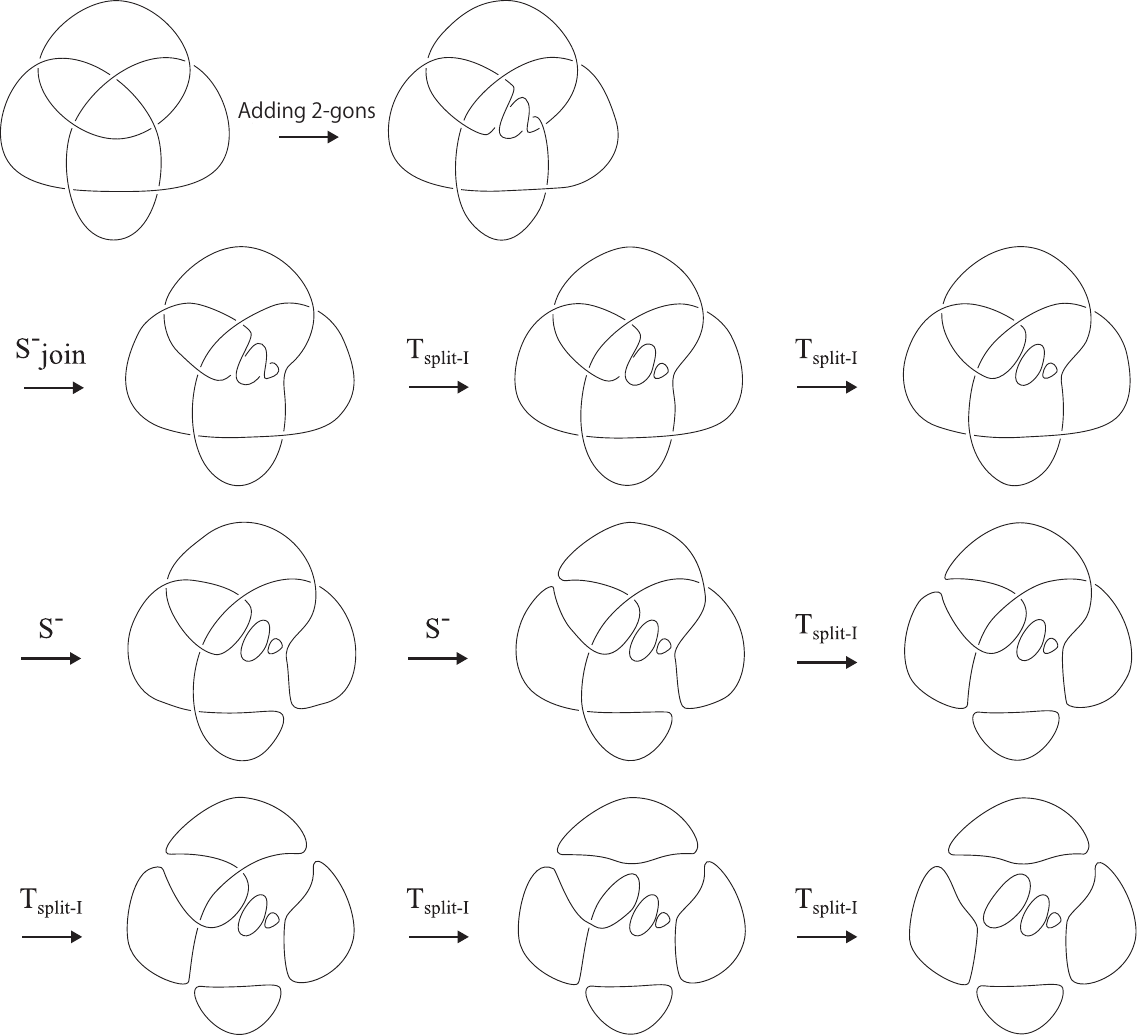}
    \caption{A splice sequence of a modified diagram obtained from a diagram of $7^2_6$ realizing the maximal Euler characteristic.  Here we use the notation $T_{\text{split-I}}$ only for typographical convenience, since writing ``$\Ts$ corresponding to $\ri^-$'' above the arrow is impractical in this figure;   no additional operation is intended.  }
    \label{fig:7_6max}
\end{figure}

\begin{figure}[htpb]
    \centering
    \includegraphics[width=8.5cm]{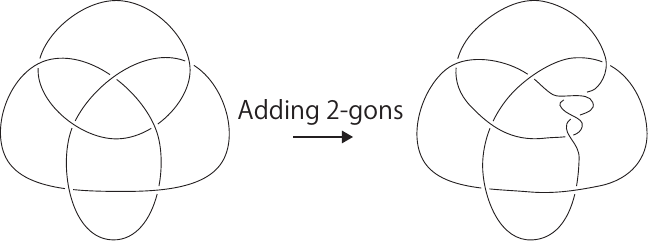}
    \caption{A modification of $7^2_6$ that does not realize the maximal Euler characteristic}
    \label{fig:7_6}
\end{figure}

\bibliographystyle{amsplain}
\bibliography{TwoCRef}

@article {Seifert1935,
    AUTHOR = {Seifert, H.},
     TITLE = {\"{U}ber das {G}eschlecht von {K}noten},
   JOURNAL = {Math. Ann.},
  FJOURNAL = {Mathematische Annalen},
    VOLUME = {110},
      YEAR = {1935},
    NUMBER = {1},
     PAGES = {571--592},
      ISSN = {0025-5831,1432-1807},
   MRCLASS = {DML},
  MRNUMBER = {1512955},
       DOI = {10.1007/BF01448044},
       URL = {https://doi.org/10.1007/BF01448044},
}

@article {Ozawa2011,
    AUTHOR = {Ozawa, Makoto},
     TITLE = {Essential state surfaces for knots and links},
   JOURNAL = {J. Aust. Math. Soc.},
  FJOURNAL = {Journal of the Australian Mathematical Society},
    VOLUME = {91},
      YEAR = {2011},
    NUMBER = {3},
     PAGES = {391--404},
      ISSN = {1446-7887,1446-8107},
   MRCLASS = {57M25 (57M20 57M27)},
  MRNUMBER = {2900614},
MRREVIEWER = {Sergej\ V.\ Matveev},
       DOI = {10.1017/S1446788712000055},
       URL = {https://doi.org/10.1017/S1446788712000055},
}

@article {PabiniakPrzytyckiSazdanovic2009,
    AUTHOR = {Pabiniak, Milena D. and Przytycki, J\'{o}zef H. and
              Sazdanovi\'{c}, Radmila},
     TITLE = {On the first group of the chromatic cohomology of graphs},
   JOURNAL = {Geom. Dedicata},
  FJOURNAL = {Geometriae Dedicata},
    VOLUME = {140},
      YEAR = {2009},
     PAGES = {19--48},
      ISSN = {0046-5755,1572-9168},
   MRCLASS = {05C15 (57M15 57M25 57M27)},
  MRNUMBER = {2504733},
MRREVIEWER = {Sergei\ K.\ Lando},
       DOI = {10.1007/s10711-008-9307-4},
       URL = {https://doi.org/10.1007/s10711-008-9307-4},
}

@article {KalfagianniLee2016,
    AUTHOR = {Kalfagianni, Efstratia and Lee, Christine Ruey Shan},
     TITLE = {Crosscap numbers and the {J}ones polynomial},
   JOURNAL = {Adv. Math.},
  FJOURNAL = {Advances in Mathematics},
    VOLUME = {286},
      YEAR = {2016},
     PAGES = {308--337},
      ISSN = {0001-8708,1090-2082},
   MRCLASS = {57M25 (57M27)},
  MRNUMBER = {3415687},
MRREVIEWER = {Masakazu\ Teragaito},
       DOI = {10.1016/j.aim.2015.09.017},
       URL = {https://doi.org/10.1016/j.aim.2015.09.017},
}

@article {MurakamiYasuhara1995,
    AUTHOR = {Murakami, Hitoshi and Yasuhara, Akira},
     TITLE = {Crosscap number of a knot},
   JOURNAL = {Pacific J. Math.},
  FJOURNAL = {Pacific Journal of Mathematics},
    VOLUME = {171},
      YEAR = {1995},
    NUMBER = {1},
     PAGES = {261--273},
      ISSN = {0030-8730,1945-5844},
   MRCLASS = {57M25},
  MRNUMBER = {1362987},
MRREVIEWER = {Chichen\ M.\ Tsau},
       URL = {http://projecteuclid.org/euclid.pjm/1102370328},
}

@article {Murasugi1958,
    AUTHOR = {Murasugi, Kunio},
     TITLE = {On the genus of the alternating knot. {I}, {II}},
   JOURNAL = {J. Math. Soc. Japan},
  FJOURNAL = {Journal of the Mathematical Society of Japan},
    VOLUME = {10},
      YEAR = {1958},
     PAGES = {94--105, 235--248},
      ISSN = {0025-5645,1881-1167},
   MRCLASS = {55.00},
  MRNUMBER = {99664},
       DOI = {10.2969/jmsj/01010094},
       URL = {https://doi.org/10.2969/jmsj/01010094},
}

@article {Crowell1959,
    AUTHOR = {Crowell, Richard},
     TITLE = {Genus of alternating link types},
   JOURNAL = {Ann. of Math. (2)},
  FJOURNAL = {Annals of Mathematics. Second Series},
    VOLUME = {69},
      YEAR = {1959},
     PAGES = {258--275},
      ISSN = {0003-486X},
   MRCLASS = {55.00},
  MRNUMBER = {99665},
MRREVIEWER = {R.\ H.\ Fox},
       DOI = {10.2307/1970181},
       URL = {https://doi.org/10.2307/1970181},
}

@book{RolfsenBook2003,
  author    = {Dale Rolfsen},
  title     = {Knots and Links},
  series    = {Mathematics Lecture Series},
  volume    = {7},
  publisher = {AMS Chelsea Publishing},
  address   = {Providence, RI},
  year      = {2003},
  note      = {Reprint of the 1976 edition},
  mrnumber  = {0515288},
}

@article {ItoTakimura2018,
    AUTHOR = {Ito, Noboru and Takimura, Yusuke},
     TITLE = {Crosscap number and knot projections},
   JOURNAL = {Internat. J. Math.},
  FJOURNAL = {International Journal of Mathematics},
    VOLUME = {29},
      YEAR = {2018},
    NUMBER = {12},
     PAGES = {1850084, 21},
      ISSN = {0129-167X},
   MRCLASS = {57M25},
  MRNUMBER = {3883412},
MRREVIEWER = {Masakazu Teragaito},
       DOI = {10.1142/S0129167X18500842},
       URL = {https://doi.org/10.1142/S0129167X18500842},
}

@article {IchiharaMizushima2010,
    AUTHOR = {Ichihara, Kazuhiro and Mizushima, Shigeru},
     TITLE = {Crosscap numbers of pretzel knots},
   JOURNAL = {Topology Appl.},
  FJOURNAL = {Topology and its Applications},
    VOLUME = {157},
      YEAR = {2010},
    NUMBER = {1},
     PAGES = {193--201},
      ISSN = {0166-8641},
   MRCLASS = {57M25 (57M27)},
  MRNUMBER = {2556097},
MRREVIEWER = {Neil R. Nicholson},
       DOI = {10.1016/j.topol.2009.04.031},
       URL = {https://doi.org/10.1016/j.topol.2009.04.031},
}

@article {HIrasawaTeragaito2006,
    AUTHOR = {Hirasawa, Mikami and Teragaito, Masakazu},
     TITLE = {Crosscap numbers of 2-bridge knots},
   JOURNAL = {Topology},
  FJOURNAL = {Topology. An International Journal of Mathematics},
    VOLUME = {45},
      YEAR = {2006},
    NUMBER = {3},
     PAGES = {513--530},
      ISSN = {0040-9383},
   MRCLASS = {57M25},
  MRNUMBER = {2218754},
MRREVIEWER = {Bruno Martelli},
       DOI = {10.1016/j.top.2005.11.001},
       URL = {https://doi.org/10.1016/j.top.2005.11.001},
}

@article {HatcherThurston1985,
    AUTHOR = {Hatcher, A. and Thurston, W.},
     TITLE = {Incompressible surfaces in {$2$}-bridge knot complements},
   JOURNAL = {Invent. Math.},
  FJOURNAL = {Inventiones Mathematicae},
    VOLUME = {79},
      YEAR = {1985},
    NUMBER = {2},
     PAGES = {225--246},
      ISSN = {0020-9910},
   MRCLASS = {57M25},
  MRNUMBER = {778125},
MRREVIEWER = {Charles Livingston},
       DOI = {10.1007/BF01388971},
       URL = {https://doi.org/10.1007/BF01388971},
}

@article {Teragaito2004,
    AUTHOR = {Teragaito, Masakazu},
     TITLE = {Crosscap numbers of torus knots},
   JOURNAL = {Topology Appl.},
  FJOURNAL = {Topology and its Applications},
    VOLUME = {138},
      YEAR = {2004},
    NUMBER = {1-3},
     PAGES = {219--238},
      ISSN = {0166-8641},
   MRCLASS = {57M25},
  MRNUMBER = {2035482},
MRREVIEWER = {Carlo Petronio},
       DOI = {10.1016/j.topol.2003.08.004},
       URL = {https://doi.org/10.1016/j.topol.2003.08.004},
}

@article{ItoTakimura2020b,
    AUTHOR = {Ito, Noboru and Takimura, Yusuke},
     TITLE = {A lower bound of crosscap numbers of alternating knots},
   JOURNAL = {J. Knot Theory Ramifications},
  FJOURNAL = {Journal of Knot Theory and its Ramifications},
    VOLUME = {29},
      YEAR = {2020},
    NUMBER = {1},
     PAGES = {1950092, 15},
      ISSN = {0218-2165},
   MRCLASS = {57K10},
  MRNUMBER = {4079618},
MRREVIEWER = {Fengling Li},
       DOI = {10.1142/S0218216519500925},
       URL = {https://doi.org/10.1142/S0218216519500925},
}

@article {Menasco1984,
    AUTHOR = {Menasco, W.},
     TITLE = {Closed incompressible surfaces in alternating knot and link
              complements},
   JOURNAL = {Topology},
  FJOURNAL = {Topology. An International Journal of Mathematics},
    VOLUME = {23},
      YEAR = {1984},
    NUMBER = {1},
     PAGES = {37--44},
      ISSN = {0040-9383},
   MRCLASS = {57M25},
  MRNUMBER = {721450},
MRREVIEWER = {Cameron McA. Gordon},
       DOI = {10.1016/0040-9383(84)90023-5},
       URL = {https://doi.org/10.1016/0040-9383(84)90023-5},
}

@article {AdamsKindred2013,
    AUTHOR = {Adams, Colin and Kindred, Thomas},
     TITLE = {A classification of spanning surfaces for alternating links},
   JOURNAL = {Algebr. Geom. Topol.},
  FJOURNAL = {Algebraic \& Geometric Topology},
    VOLUME = {13},
      YEAR = {2013},
    NUMBER = {5},
     PAGES = {2967--3007},
      ISSN = {1472-2747},
   MRCLASS = {57M25},
  MRNUMBER = {3116310},
MRREVIEWER = {Ilya S. Kofman},
       DOI = {10.2140/agt.2013.13.2967},
       URL = {https://doi.org/10.2140/agt.2013.13.2967},
}

@article {Zhang2008,
    AUTHOR = {Zhang, Gengyu},
     TITLE = {Crosscap numbers of two-component links},
   JOURNAL = {Kyungpook Math. J.},
  FJOURNAL = {Kyungpook Mathematical Journal},
    VOLUME = {48},
      YEAR = {2008},
    NUMBER = {2},
     PAGES = {241--254},
      ISSN = {1225-6951},
   MRCLASS = {57M25 (57M27)},
  MRNUMBER = {2429312},
MRREVIEWER = {Jeff Johannes},
       DOI = {10.5666/KMJ.2008.48.2.241},
       URL = {https://doi.org/10.5666/KMJ.2008.48.2.241},
}

@incollection {Kauffman1983,
    AUTHOR = {Kauffman, Louis H.},
     TITLE = {Combinatorics and knot theory},
 BOOKTITLE = {Low-dimensional topology ({S}an {F}rancisco, {C}alif., 1981)},
    SERIES = {Contemp. Math.},
    VOLUME = {20},
     PAGES = {181--200},
 PUBLISHER = {Amer. Math. Soc., Providence, RI},
      YEAR = {1983},
   MRCLASS = {57M25 (05C10)},
  MRNUMBER = {718142},
       DOI = {10.1090/conm/020/718142},
       URL = {https://doi.org/10.1090/conm/020/718142},
}

@article {Clark1978,
    AUTHOR = {Clark, Bradd Evans},
     TITLE = {Crosscaps and knots},
   JOURNAL = {Internat. J. Math. Math. Sci.},
  FJOURNAL = {International Journal of Mathematics and Mathematical
              Sciences},
    VOLUME = {1},
      YEAR = {1978},
    NUMBER = {1},
     PAGES = {113--123},
      ISSN = {0161-1712},
   MRCLASS = {55A25},
  MRNUMBER = {478131},
MRREVIEWER = {R. J. Daverman},
       DOI = {10.1155/S0161171278000149},
       URL = {https://doi.org/10.1155/S0161171278000149},
}

@article {ItoTakimura2020v,
    AUTHOR = {Ito, Noboru and Takimura, Yusuke},
     TITLE = {Crosscap number of knots and volume bounds},
   JOURNAL = {Internat. J. Math.},
  FJOURNAL = {International Journal of Mathematics},
    VOLUME = {31},
      YEAR = {2020},
    NUMBER = {13},
     PAGES = {2050111, 33},
      ISSN = {0129-167X},
   MRCLASS = {57K10 (57K32)},
  MRNUMBER = {4192453},
MRREVIEWER = {Fengling Li},
       DOI = {10.1142/S0129167X20501116},
       URL = {https://doi.org/10.1142/S0129167X20501116},
}

@article {Kindred2020,
    AUTHOR = {Kindred, Thomas},
     TITLE = {Crosscap numbers of alternating knots via unknotting splices},
   JOURNAL = {Internat. J. Math.},
  FJOURNAL = {International Journal of Mathematics},
    VOLUME = {31},
      YEAR = {2020},
    NUMBER = {7},
     PAGES = {2050057, 30},
      ISSN = {0129-167X},
   MRCLASS = {57K10},
  MRNUMBER = {4123946},
MRREVIEWER = {Fengling Li},
       DOI = {10.1142/S0129167X20500573},
       URL = {https://doi.org/10.1142/S0129167X20500573},
}
\end{document}